\newtheorem{theorem}{Theorem}[section]
\newtheorem{lemma}[theorem]{Lemma}
\newtheorem{proposition}[theorem]{Proposition}
\theoremstyle{definition}
\newtheorem{definition}[theorem]{Definition}
\newtheorem{remark}[theorem]{Remark}
\newtheorem{remarks}[theorem]{Remarks}
\numberwithin{equation}{section}
\newcommand{\supp}{\mathrm{supp}}      
\renewcommand{\div}{\mathrm{div}\,}    
\providecommand{\norm}[1]{\lVert#1\rVert} 
\newcommand{\R}{\mathbb{R}}
\newcommand{\Z}{\mathbb{Z}}
\newcommand{\N}{\mathbb{N}}
\newcommand{\PP}{\mathbb{P}}
\newcommand{\IE}{\mathbb{E}}
\newcommand{\cA}{{\mathcal A}}
\newcommand{\cF}{{\mathcal F}}
\newcommand{\cH}{{\mathcal H}}
\newcommand{\cR}{{\mathcal R}}
\newcommand{\cS}{{\mathcal S}}
\newcommand{\cT}{{\mathcal T}}
\newcommand\restr[2]{{
  \left.\kern-\nulldelimiterspace 
  #1 
  \vphantom{\big|} 
  \right|_{#2} 
  }}
\title{Partial and full hyper-viscosity for Navier-Stokes and primitive equations}
\subjclass[2010]{Primary: 35Q35; Secondary: 35K25, 35K30, 35Q30, 76D03,  86A05.}
\keywords{Primitive equations, Navier-Stokes equations, hyper-viscosity, partial hyper-viscosity, uniquness of weak solutions, convergence of hyperviscous solutions 
}
\author[Amru Hussein]{Amru Hussein} 
\address{Department of Mathematics,
	TU Darmstadt, Schlossgartenstr. 7, 64289 Darmstadt, Germany}
\email{hussein@mathematik.tu-darmstadt.de}
\begin{document}

\begin{abstract}
\noindent
The $3$-D primitive equations and incompressible Navier-Stokes equations with full hyper-viscosity and only horizontal hyper-viscosity 
are considered on the torus, i.e., the diffusion term $-\Delta$ is replaced by $-\Delta+ \varepsilon(-\Delta)^{l}$ or by $-\Delta + \varepsilon(-\Delta_H)^{l}$,  respectively, where $\Delta_H = \partial_x^2+\partial_y^2 $, $\Delta= \Delta_H + \partial_z^2$, $\varepsilon> 0$, $l>1$. Hyper-viscosity is applied in many numerical schemes, and in particular horizontal hyper-viscosity appears in meteorological models. A classical result by Lions states that for the Navier-Stokes equations uniqueness of global weak solutions for initial data in $L^2$ holds if $-\Delta$ is replaced by $(-\Delta)^{5/4}$. Here, for the primitive equations the corresponding result is proven for $(-\Delta)^{8/5}$. For the case of horizontal hyper-viscosity $l=2$ is sufficient in both cases.  Strong convergence for $\varepsilon\to 0$ of hyper-viscous solutions to a weak solution of the Navier-Stokes and primitive equations, respectively, is proven as well. The approach presented here is based on the construction of strong solutions via an evolution equation approach for initial data in $L^2$ and  weak-strong uniqueness.
\end{abstract}

\maketitle

\section{Introduction}
The subject of this article are the incompressible $3$-D  Navier-Stokes  and the $3$-D primitive equations -- for both of which uniqueness of weak solutions for initial data in $L^2$ is not known so far -- and the stabilizing effect of full and partial hyper-viscosity on these. 
The Navier-Stokes equations are a fundamental model for viscous fluids,
 and the primitive equations for the ocean and atmosphere are
 a model for geophysical flows derived from the Navier-Stokes equations 
assuming a hydrostatic balance for the pressure term in the vertical direction. For simplicity, a periodic setting is considered here.

In some numerical simulations hyper-viscous models are used instead of the classical ones, where  
to the viscosity term $-\Delta = -(\partial_x^2 + \partial_y^2+ \partial_z^2)$ of the classical models  higher powers $(-\Delta)^{l}$,  $l>1$, are added referred to as  hyper-viscosity or hyper-diffusion. In particular in some meteorological simulations  only horizontal hyper-viscosity terms are added replacing $-\Delta$  by partial hyper-viscous terms $-\Delta + (-\Delta_H)^l$,  $l>1$, where $\Delta_H=\partial_x^2+\partial_y^2$, cf. \cite{Lauritzenetall}.
The strategy of this regularization is to strengthen the  stabilizing effect of the linear part in order to balance turbulent effects of the non-linearity.
The idea of regularization by adding hyper-viscosity goes back to Ladyzhenskaya, see \cite{Lady1958}, where the hyper-viscous $3$-D Navier-Stokes equations with $-\Delta$ replaced  by $(-\Delta)^{2}$ is considered proving that this enforces uniqueness of weak solutions. This result has been refined by Lions, see \cite{Lions1959, Lions1969}, proving that $(-\Delta)^{5/4}$ is sufficient for this purpose. A recent result by Luo and Titi proves the sharpness of this result for a larger class of weak solutions, which do not necessarily satisfy the energy inequality, using convex integration, see \cite{LuoTiti2018}.

The overall aim of this note 
is to give a rigorous justification  of the usage of full and partial hyper-viscosity in numerical simulations by proving global well-posedness of these models for initial data in $L^2$. Global well-posedness comprises existence and uniqueness of global solutions as well as continuous dependence on the data. More specifically, first, a result is given for the primitive equations  corresponding to Lions' $(-\Delta)^{5/4}$-result for the Navier-Stokes equations, which seems to be lacking so far. For the primitive equations it turns out that $(-\Delta)^{8/5}$ is sufficient to enforce uniqueness of weak solutions, and it is not surprising that for the primitive equations a higher power appears since the non-linearity involves 'stronger' terms -- compared to the non-linearity of the Navier-Stokes equations -- such as
\begin{align*}
w(v)\partial_z v, \quad w(v)= - \int_{-1}^z \partial_x v_1 + \partial_y v_2.
\end{align*}
Second, the regularization by only horizontal hyper-viscosity for both the Navier-Stokes and the primitive equations is  studied. This feature is used in several meteorological models, see \cite[Chapter 13]{Lauritzenetall}, 
for instance in the COSMO model applied by several weather forecasting services, cf. \cite[Section 5, Numerical Smoothing]{COSMO}. Here, it is proven that the horizontally bi-harmonic hyper-viscosity $-\Delta + (-\Delta_H)^2$ is sufficient for both equations to assure uniqueness of weak solutions.  

Third, the relation of solutions to the hyper-viscous equations and non-hyper-viscous weak solutions is investigated.
Considering hyper-viscous terms for the above-mentioned values of $l>1$
\begin{align*}
A= -\nu \Delta +  \varepsilon(-\Delta)^{l}   \quad \hbox{and} \quad A = -\nu \Delta + \varepsilon(-\Delta_H)^{l} 
\end{align*}
with a hyper-viscosity parameter $\varepsilon>0$ and viscosity $\nu>0$, strong convergence for $\varepsilon\to 0+$ is proven. This implies that by the hyper-viscous regularization at least one weak solution of the classical model can be approximated.

The methods used here are based on an evolution equation approach for semi-linear equations. The strategy is to construct global strong solutions via methods from the theory of maximal $L^2$-regularity with  initial  values in $L^2$ and then to establish weak-strong uniqueness results. For the proof of local strong well-posedness, one considers for the linear part $A$ of the hyper-viscous equation fractional powers $A^{s}$, $s\in \R$, and interpolation-extrapolation scales. Then one can take the domain $D(A^{-1/2})$ as ground space, where the operator $A$ is a self-adjoint operator with domain $D(A^{1/2})$. Whether the portion of hyper-viscosity is sufficient for the equation 
\begin{align*}
\partial_t u + A u = \cF(u,u), \quad u(0)=u_0,
\end{align*}
is determined by the estimate on the quadratic non-linearity $\cF(u,u)$ with respect to the $D(A^{-1/2})$-norm, where a quadratic estimate in $D(A^{1/4})$ is sufficient to construct a strong solution with initial values in $L^2$, compare Figure~\ref{fig:DA}. The factor $1/4$ results form the quadratic estimate on the non-linearities
\begin{align*}
\norm{\cF(u,u)}_{L^2(0,T;D(A^{-1/2}))} \leq C \norm{v}^2_{L^4(0,T;D(A^{1/4}))}
\leq C \norm{v}^2_{H^{1/4}(0,T;D(A^{1/4}))}, \quad T>0,
\end{align*} 
 where one proceeds by using the mixed derivative theorem, cf. \cite[Corollary III.4.5.10]{PruessSimonett2016}, 
 \begin{align*}
 H^{1}(0,T;D(A^{-1/2}))\cap L^{2}(0,T;D(A^{1/2})) \hookrightarrow 
 H^{\theta}(0,T;D(A^{1/2-\theta})), \quad \theta\in (0,1),
 \end{align*}
for $\theta=1/4$, to control the non-linearity in the maximal $L^2$-regularity norm.
The weak-strong uniqueness follows then by using the additional regularity of  the strong solution, which allows one to use it as test-function, and the energy inequality for weak solutions. 
The convergence results are proven using a compactness argument, and therefore, precise convergence rates are not obtained.
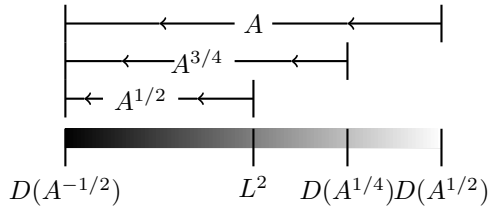
\begin{figure}[h]
	\begin{tikzpicture}[scale=0.5]
	\draw (0, 0) rectangle (10, 0.5); \
	\shade[left color=black, right color=white]
	(0,0) rectangle (10,0.5);
	\draw[thick] (0,0.5) -- (0,-0.5);
	\draw[thick] (0,2.8) -- (0,3.8);
	\node[below] at (0,-0.5) {$D(A^{-1/2})$}; 
	\draw[thick] (5,0.5) -- (5,-0.5);
	\node[below] at (5,-0.5) {$L^2$};
	\draw[thick] (7.5,0.5) -- (7.5,-0.5);
	\node[below] at (7.5,-0.5)
	{$D(A^{1/4})$};
	\draw[thick] (10,0.5) -- (10,-0.5);
	\node[below] at (10,-0.5)
	{$D(A^{1/2})$};
	\draw[thick] (10,3.8) -- (10,2.8);
 \draw[thick, ->] (5,1.3) -- (3.5,1.3);
  \draw[thick] (3.5,1.3) -- (3,1.3);
 \draw[thick, ->] (1,1.3) -- (0.5,1.3);
 \draw[thick] (0.5,1.3) -- (0,1.3); 
 	\draw[thick] (5,1.8) -- (5,0.8);
 	\draw[thick] (0,1.8) -- (0,0.8);
 	\node at (2,1.3){$A^{1/2}$}; 
    \draw[thick] (6,2.3) -- (4.5,2.3);
    \draw[thick, ->] (7.5,2.3) -- (6,2.3);
  \draw[thick, ->] (2.9,2.3) -- (1.5,2.3);
  \draw[thick] (1.5,2.3) -- (0,2.3);
	\node at (3.5,2.3){$A^{3/4}$};
	\draw[thick, -] (0,3.3) -- (2.5,3.3);
	\draw[thick, <-] (2.5,3.3) -- (4.5,3.3);
		\draw[thick] (7.5,2.8) -- (7.5,1.8);
		\draw[thick] (0,2.8) -- (0,1.8);
	\node at (5,3.3)
	{$A$};
	\draw[thick, -] (5.5,3.3) -- (7.5,3.3);
	\draw[thick, <-] (7.5,3.3) -- (10,3.3);
	\end{tikzpicture}
	\caption{Operator domains and interpolation-extrapolation scales}\label{fig:DA}
\end{figure}

For the $3$-D Navier-Stokes equations existence of weak solutions for initial data in $L^2$ is a classical result due to Leray, cf. \cite{Leray1934}, while
uniqueness holds locally and globally for small data for initial data in $L^3$, cf. the classical results by Giga  \cite{Gig86} and Kato \cite{Kat84}. There are several improvements including $u_0\in BMO^{-1}$ by Koch and Tataru, see \cite{Koch2001}, or $u_0$ in certain Besov spaces by Cannone, see \cite{Can97}. In fact for a larger and less regular class of weak solutions  non-uniqueness for initial data in $L^2$ has been proven recently by Buckmaster and Vicol using techniques from convex integration \cite{Buckmaster2017}. However, it is still an open question if this holds for weak solutions satisfying the energy inequality.

There are many analytical results concerning the hyper-viscous  Navier-Stokes equations while the hyper-viscous primitive equations have not been addressed so far to the best knowledge of the author. The asymptotic behavior for $t\to \infty$ of some hyper-viscous solutions compared to non-hyper-viscous ones has been investigated by Cannone and Karch, see \cite{Cannone2005}. The study of global attractors and strong convergence of hyper-viscous solutions has been performed by Ou and Sritharan, see
\cite{Ou1991, Ou1996}, and 
Younsi, see \cite{Younsi2010} and the references therein. 
Weak convergence of hyper-viscous solutions has been studied already by Lions \cite{Lions1969}. The question if hyper-viscosity has always a stabilizing effect is addressed in \cite[Section 5]{Larios2016} considering cases where energy inequalities or maximum principles are not preserved by the hyper-viscous model. Only horizontal hyper-diffusion $\Delta_H^2$ for the planetary geostrophic equations has been proposed in \cite{SamelsonVallis1997} and analyzed in \cite{CaoTitiZiane2004}.
Spectral hyper-viscosity, where hyper-viscosity is dependent on the frequency range, is studied for instance in \cite{GuermondPrudhomme2003} and \cite{AvriXiao2009}, see also the references therein. Regularity assumptions  -- on the lines of classical results for the non-hyper-viscous Navier-Stokes equations -- which assure uniqueness of weak solutions for any $
\alpha>1$ are investigated in \cite{DingSun2015}. 

Another class of modifications of the Navier-Stokes equations are the so-called $\alpha$-models, see \cite{Ali2013} and the references therein. 
An example is the simplified Bardina turbulence model, which involves some additional smoothing, and global existence and uniqueness of weak solutions and its global attractor are studied in \cite{Yanpingetall2006}. The Lagrangian averaged Navier-Stokes-$\alpha$-models are generalizations of the hyper-viscous Navier-Stokes equations, see \cite{OlsenTiti2007} and the references therein.

Note that in contrast to the Navier-Stokes equations, it is known that the $3$-D primitive equations admit a unique, global, strong solution for arbitrary large data $v_0\in H^1$. This breakthrough result
has been proven in 2007 by Cao and Titi \cite{CaoTiti2007}. 
Existence of weak solution for initial data in $L^2$ goes back to the pioneering work by Lions, Temam and Wang, see \cite{Lionsetall1992, Lionsetall1992_b}, while uniqueness for initial data in $L^2$ still constitutes an open problem. 
However, some progress has been made in this direction, and it has been proven that uniqueness of weak solutions holds for $v_0\in L^{\infty}_{xy}L^1_z$, see \cite{GigaGriesHusseinHieberKashiwabara2017NN}, or for $v_0\in L^6$ with $\partial_z v_0\in L^2$ and $v_0\in C^0$, see \cite{LiTiti2015} and also \cite{LiTiti2016} for a recent survey on analytical results for the primitive equations.

This note is organized as follows: In Section~\ref{sec:pre} the hyper-viscous equations are given for a periodic setting and basic notions such as function spaces and weak solutions are made precise. The main results are formulated in Section~\ref{sec:main} with proofs in Section~\ref{sec:proofs}.

\section{Preliminaries}\label{sec:pre}

Consider the cylindrical domain $\Omega :=(0,1)\times (0,1)\times (-1,1)$, and denote by $x,y\in (0,1)\times (0,1)$ the horizontal coordinates and by $z\in (-1,1)$ the vertical one. Let $u=(v,w)\colon \Omega\rightarrow \R^3$ be the velocity with vertical component $v=(v_1,v_2)$ and horizontal component $w$, and $p\colon \Omega \rightarrow \R$ the pressure. Then 
the hyper-viscous Navier-Stokes equations are given by
\begin{equation}\label{eq:NS}
\left \{\begin{array}{rll}
\partial _t u +u\cdot\nabla u +A  u+\nabla p=& \ 0&\text{ in }(0,T)\times\Omega ,\\
\div u =& \ 0&\text{ in }(0,T)\times\Omega ,\\
u,p\text{ periodic in }x,y,z, & &\\
u (0)=&\ u_0&\text{ in }\Omega, 
\end{array}\right .\tag{HNS$_{A}$}
\end{equation}
and 
the hyper-viscous primitive equations by
\begin{equation}\label{eq:PE}
\left \{\begin{array}{rll}
\partial _tv+u\cdot\nabla v + A v+\nabla _Hp=& \ 0&\text{ in }(0,T)\times\Omega ,\\
\partial _zp=&\ 0&\text{ in }(0,T)\times\Omega ,\\
\div u=& \ 0&\text{ in }(0,T)\times\Omega ,\\
p\text{ periodic in }x,y& 
\\
v ,w \text{ periodic in }x,y,z,&\text{ even and odd }&\text{ in }z,\\
u(0)=&\ u_0&\text{ in }\Omega ,
\end{array}\right .\tag{HPE$_{A}$}
\end{equation}
where $\nabla_H  = (\partial_x, \partial_y)^T$, are considered for the cases
\begin{align}\label{eq:opA}
A= -\nu \Delta - \varepsilon\Delta^{l}   \quad \hbox{and} \quad A = -\nu \Delta - \varepsilon\Delta_H^{l}, \quad \hbox{for some}\quad \nu,\varepsilon\geq 0, \quad l> 1,
\end{align}
where $\Delta = \partial_x^2 + \partial_y^2+ \partial_z^2$ and $\Delta_H =\partial_x^2 + \partial_y^2$.
With a slight abuse of notation the operators in \eqref{eq:PE} and \eqref{eq:NS} are denoted both by $A$. 
Note that the vertical periodicity and parity conditions in \eqref{eq:PE} 
correspond in to an equivalent set of equations on $(0,1)\times (0,1)\times (-1,0)$ with lateral periodicity and
\begin{align*}
\partial_z v\vert_{z=0} = \partial_z v\vert_{z=-1} =0 \quad \hbox{and} \quad w\vert_{z=0} = w\vert_{z=-1} =0.
\end{align*}
Therefore, the divergence free condition 
for the primitive equation translates into 
$\div _H\overline{v} =0$, where $\overline{v}(x,y)=\frac{1}{2}\int^1_{-1}v(x,y,z)\mathop{}\!\mathrm{d}z$, and 
\[w(\cdot,\cdot,z)= -\int^z_{-1}\div _H v(\cdot,\cdot,\zeta)\mathop{}\!\mathrm{d}\zeta, \quad \hbox{where}\quad  \div _H v = \partial_x v_1+ \partial_y v_2.
\]
Hence the primitive equations can be reformulated equivalently using the surface pressure $p_s\colon (0,1)\times(0,1)\rightarrow \R$ and $u=u(v)=(w(v),v)$ to become
\begin{equation}\label{eq:PE2}
\left \{\begin{array}{rll}
\partial _tv+w(v)\partial_z v + v \cdot \nabla_H v + A v+\nabla _Hp_s=& \ 0&\text{ in }(0,T)\times\Omega ,\\
\div_H \bar{v}=& \ 0&\text{ in }(0,T)\times\Omega ,\\
p_s\text{ periodic in }x,y& 
\\
v \text{ periodic in }x,y,z,&\text{ even }&\text{ in }z,\\
v(0)=&\ v_0&\text{ in }\Omega.
\end{array}\right .\tag{HPE$_{A}'$}
\end{equation}

\subsection{Function spaces}
For $s\in [0,\infty )$ one defines the Bessel potential spaces
\[H^{s}_{per}(\Omega )=\overline{C_{per}^\infty (\overline{\Omega })}^{\Vert \cdot\Vert _{H^{s}}} \quad \hbox{and} \quad 
H^{-s}_{per}(\Omega ) =  (H^{s}_{per}(\Omega ))', 
\]
where
$C_{per}^\infty (\overline{\Omega })$ denotes the space of smooth functions which are periodic of any order (cf. \cite[Section 2]{HieberKashiwabara2015}) in all three directions 
on $\partial \Omega$ and $(\cdot)'$ denotes the $L^2$-dual. 
The space $H^{s}(\Omega )$ denotes the Bessel potential space of order $s$, with norm $\Vert \cdot \Vert _{H^{s}}$ defined via the restriction  to $\Omega$ of the corresponding space defined on the whole space (cf. \cite[Definition 3.2.2.]{Triebel}). 
One sets $H^{0}=L^2$, and moreover $H^s_{per}(\Omega)$ for $s\in \R$ can be characterized by means of Fourier series as
\begin{align*}
H^s_{per}(\Omega)= \{v \hbox{ such that } \sum_{k\in \Z^3}(|k|^2+1)^{s} |\hat{v}(k)|^2 <\infty  \},
\end{align*}
where
\begin{align*}
\hat{v}(k) = \frac{1}{2} \int_{\Omega} e^{2\pi ik_1 x}e^{2\pi ik_2 y} e^{\pi ik_2 z} dx\, dy\, dz, \quad  k=(k_1,k_2,k_3)\in \Z^3. 
\end{align*}

The divergence free conditions in the above sets of equations can be encoded into the space of solenoidal functions
\[
L^2_\sigma (\Omega )=\overline{\{u\in C_{per}^\infty (\overline{\Omega })^3:\div u =0\}}^{\Vert \cdot\Vert _{L^2}}\mbox{ and } 
L^2_{\overline{\sigma }} (\Omega )=\overline{\{v\in C_{per}^\infty (\overline{\Omega })^2, \quad v\hbox{ even w.r.t. $z$}:\div _H\overline{v} =0\}}^{\Vert \cdot\Vert _{L^2}},
\]
and there are bounded linear projections -- the \textit{Helmholtz projection} and the \textit{hydrostatic Helmholtz projection} --
\begin{align*}
\PP_{\sigma}\colon L^2(\Omega )^3 \rightarrow L^2_\sigma (\Omega ) \quad \hbox{and} \quad \PP_{\bar{\sigma}}\colon L_{ev}^2(\Omega )^2:= \{v\in L^2(\Omega )^2\colon v\hbox{ even w.r.t. $z$}\} \rightarrow L^2_{\bar\sigma} (\Omega ),
\end{align*}
respectively. In the periodic setting both projections can be given explicitly using $3$-D and $2$-D Riesz-transforms, i.e., 
\begin{align*}
\PP_{\sigma}v = v - (R_iR_j)_{1\leq i,j\leq 3}v \quad \hbox{and} \quad \PP_{\bar\sigma}v = \tilde{v} - (R_iR_j)_{1\leq i,j\leq 2}\bar{v}, \quad \tilde{v} = v-\bar{v}, 
\end{align*}
where $R_j$ is defined by the symbol $r_j(k)=\frac{k_j}{|k|}$. In particular $\PP_{\sigma}$ and $\PP_{\bar\sigma}$ extend to bounded projections in $H^s_{per}(\Omega)$ for any $s\in \R$. Therefore, one sets for  $s\in \R$
\begin{eqnarray*}
H^{s}_{per, \sigma}(\Omega)= \PP_{\sigma}H^{s}_{per}(\Omega)^3  & \hbox{and}&
 H^{s}_{per, \bar\sigma}(\Omega)= \PP_{\bar{\sigma}}\{v\in H^{s}_{per}(\Omega)^2\colon v\hbox{ even w.r.t. $z$} \},  
\end{eqnarray*}
where for $s\geq 0$ one has $H^{s}_{per, \sigma}(\Omega)= H^{s}_{per}(\Omega)^3  \cap L^2_{\sigma}(\Omega)$ and $H^{s}_{per, \bar\sigma}(\Omega)= H^{s}_{per}(\Omega)^2  \cap L^2_{\bar\sigma}(\Omega)$.

\subsection{Operator domains, interpolation and extrapolation scales}
Note that due to periodicity the projectors $\PP_{\sigma}$ and $\PP_{\bar{\sigma}}$ commute with the operators $A$ given by \eqref{eq:opA} while annihilating the pressure terms. Hence \eqref{eq:NS} and \eqref{eq:PE} are of the semi-linear form
\begin{align}\label{eq:Fform1}
\partial_t \psi + A \psi + \cF(\psi,\psi) =f, \quad \psi(0)=\psi_0,
\end{align}
where for \eqref{eq:NS} $\psi=u$ and for \eqref{eq:PE} $\psi=v$ with
\begin{align}\label{eq:Fform2}
\cF(u,u')=\cF_{NS}(u,u')= \PP_{\sigma}(u\cdot \nabla u') \quad \hbox{and} \quad  \cF(v,v')=\cF_{PE}(v,v')= \PP_{\bar\sigma}(w(v)\partial_z v' + v\cdot \nabla_H v'),
\end{align}
respectively, and with an abuse of notation one omits the projections in the linear part writing $A=\PP_{\sigma}A=A\PP_{\sigma}$ and $A=\PP_{\bar\sigma}A=A\PP_{\bar\sigma}$.
The mapping properties of the operators $A$ in $L^2$-spaces can be studied explicitly using Fourier series. Therefore, for
\begin{align}\label{eq:Afull}
A= -\nu \Delta + \varepsilon(-\Delta)^{l},  \quad \nu\geq 0,\quad \varepsilon>0, \quad l\geq 1,
\end{align}
one has in the case of Navier-Stokes and primitive equations, respectively, that $A\colon D(A)\subset L_{\sigma}^2(\Omega) \rightarrow L_{\sigma}^2(\Omega)$ and $A\colon D(A)\subset L_{\bar\sigma}^2(\Omega) \rightarrow L_{\bar\sigma}^2(\Omega)$ with 
\begin{align*}
D(A)= H^{2l}_{per, \sigma}(\Omega)   \quad \hbox{and} \quad D(A)= H^{2l}_{per, \bar\sigma}(\Omega), 
\end{align*}
define self-adjoint operators. This allows one to define fractional powers, where 
\begin{align*}
D(A^s)= H^{2s}_{per, \sigma}(\Omega) \quad \hbox{and} \quad D(A^s)= H^{2s}_{per, \bar\sigma}(\Omega), \quad s\in \R,
\end{align*}
respectively.
In the case 
\begin{align}\label{eq:Ahor}
A= -\nu \Delta + \varepsilon(-\Delta_H)^{l},  \quad \nu> 0,\quad \varepsilon>0, \quad l\geq 1,
\end{align}
these define self-adjoint operators in $L_{\sigma}^2(\Omega)$ and $L_{\bar\sigma}^2(\Omega)$  with
\begin{align*}
D(A)&= \{u\in L^{2}_{\sigma}(\Omega)\colon \sum_{k \in \Z^3} (|k_3|^2 + |k_H|^{2l}+1)^2 |\hat{u}(k)|^2 <\infty \}, \\
D(A)&= \{v\in L^{2}_{\bar\sigma}(\Omega)\colon \sum_{k \in \Z^3} (|k_3|^2 + |k_H|^{2l}+1)^2 |\hat{v}(k)|^2 <\infty \},    
\end{align*}
respectively, where $k_H = (k_1,k_2)$. For $s\in \R$
\begin{align*}
D(A^s)&= \{u\hbox{ such that } \sum_{k \in \Z^3} (|k_3|^2 + |k_H|^{2l}+1)^s |\hat{u}(k)|^2 <\infty \hbox{ and }\PP_{\sigma}u=u\}, \\
D(A^s)&= \{v \hbox{ such that }  \sum_{k \in \Z^3} (|k_3|^2 + |k_H|^{2l}+1)^{s} |\hat{v}(k)|^2 <\infty \hbox{ and }\PP_{\bar\sigma}v=v \}. 
\end{align*}
Thereby one can define interpolation and extrapolation scales using fractional powers of $A$
\begin{align*}
A\colon D(A^{s+1})\subset D(A^{s}) \rightarrow D(A^{s}), \quad s\in \R,
\end{align*}
and in particular for $s=-1/2$, $A\colon D(A^{1/2})\subset D(A^{-1/2}) \rightarrow D(A^{-1/2})$,
where with an abuse of notation the operators are still denoted by $A$ although being defined between different spaces.

\subsection{Weak solutions}
Note that there are several notions of weak solutions depending on the regularity class. For instance aside the classical notion of Leray-Hopf  weak solutions used here, there is also the larger class of weak solutions used in the context of convex integration, cf. \cite{Buckmaster2017}.

\begin{definition}[Weak solution to the hyper-viscous Navier-Stokes equations]
	Let $A$ be as in \eqref{eq:Afull} or \eqref{eq:Ahor}, $T>0$, $u_0\in L^2_{\sigma}(\Omega)$ and $f\in L^2(0,T;D(A^{-1/2}))$. Then
a function $u$ is called \textit{weak solution to the  hyper-viscous Navier-Stokes equations} \eqref{eq:NS} if 
\begin{enumerate}
	\item[(i)] $u\in C^{w}([0,T]; L^2_{\sigma}(\Omega))\cap L^2(0,T; D(A^{1/2}))$;  
	\item[(ii)] For all $\varphi \in C^{1}([0,T]; D(A^{1/2}))\cap L^2(0,T; D(A))$ and $t\in (0,T)$
	\begin{align*}
	\int_0^t u \cdot \partial_t \varphi - A^{1/2}v\cdot A^{1/2}\varphi - (u\cdot \nabla u)\cdot \varphi = v(t)\cdot \varphi(t)-v(0)\cdot \varphi(0) - \int_0^t f\cdot \varphi;
	\end{align*}
	\item[(iii)] $u$ satisfies for all $t\in (0,T]$ and almost all $s\in (0,t)$  the strong energy inequality
	\begin{align}\label{eq:ei_ns}
	\norm{u(t)}^2_{L^2} + 2\int_s^t \norm{A^{1/2}u(\tau)}^2_{L^2} d\tau \leq 	\norm{u(s)}^2_{L^2} + \int_s^t f(\tau)\cdot u(\tau) d\tau. \tag{EI$_{NS}$}
	\end{align}
\end{enumerate}
\end{definition}

 \begin{definition}[Weak solution to the hyper-viscous primitive equations]
 	Let $A$ be as in \eqref{eq:Afull} or \eqref{eq:Ahor}, $T>0$, $v_0\in L^2_{\bar\sigma}(\Omega)$ and $f\in L^2(0,T;D(A^{-1/2}))$. Then
 	a function $v$ is called \textit{weak solution to the  hyper-viscous primitive equations} \eqref{eq:PE} if 
 	\begin{enumerate}
 		\item[(i)] $v\in C^{w}([0,T]; L^2_{\sigma}(\Omega))\cap L^2(0,T; D(A^{1/2}))$;
 		\item[(ii)] For all $\varphi \in C^{1}([0,T]; D(A^{1/2}))\cap L^2(0,T; D(A))$ and $t\in (0,T)$
 		\begin{align*}
 		\int_0^t v \cdot \partial_t \varphi - A^{1/2}v\cdot A^{1/2}\varphi - (u(v)\cdot \nabla v)\cdot \varphi = v(t)\cdot \varphi(t)-v(0)\cdot \varphi(0) - \int_0^t f\cdot \varphi;
 		\end{align*}
 		\item[(iii)] $v$ satisfies for all $t\in (0,T]$ and almost all $s\in (0,t)$ the strong energy inequality
 		\begin{align}\label{eq:ei_pe}
 		\norm{v(t)}^2_{L^2} + 2\int_s^t \norm{A^{1/2}v(\tau)}^2_{L^2} d\tau \leq 	\norm{v(s)}^2_{L^2} + \int_s^t f(\tau)\cdot v(\tau) \hbox{d} \tau.  \tag{EI$_{PE}$}
 		\end{align}
 	\end{enumerate}
 \end{definition}

Here, $C^{w}$ stands for spaces of weakly continuous functions. A weak solution is said to satisfy the \textit{energy equality} if in \eqref{eq:ei_ns} and
\eqref{eq:ei_pe} $\leq$ can be replaced by $=$, respectively. Note that for $T\in (0,\infty)$ it follows from the energy inequality that there is a constant $C>0$ such that
\begin{align} \label{eq:energyestimate}
\norm{u}^2_{L^{\infty}(0,T;L^2)} + \norm{u}^2_{L^{2}(0,T;D(A^{1/2}))} \leq C 
(\norm{u(0)}^2_{L^2} + \norm{f}^2_{L^{2}(0,T;D(A^{-1/2}))}),
\end{align}
where one uses the embedding $L^{\infty}(0,T;L^2(\Omega))\hookrightarrow L^{2}(0,T;L^2(\Omega))$ and Young's inequality amongst others. For the primitive equations the analogous statement holds. 

\section{Main results}\label{sec:main}

The main results are formulated in parallel for the hyper-viscous Navier-Stokes and primitive equations.  Note that case $(i)$ in Theorem~\ref{thm:hns} $(a)$ is Lions' classical result, see \cite{Lions1969}. The corresponding convergence in $(b)$ for $\delta=1$ has been proven by Guermond and Prudhomme, see \cite{GuermondPrudhomme2003}, and Younsi, see \cite{Younsi2010}. It is repeated here to provide a complete picture. Also the proofs given here are slightly different than the original ones since here they rely  mainly on evolution equation methods rather than energy methods. 

\begin{theorem}[Hyper-viscous Navier-Stokes equations]\label{thm:hns} Consider the hyper-viscous Navier-Stokes equations \eqref{eq:NS} with
\begin{align*}
		&(i)\quad A= \nu (-\Delta) + \varepsilon(-\Delta)^{5/4}, \quad \varepsilon>0, \quad \nu \geq 0 \quad \hbox{or} \\
		 &(ii)\quad A= \nu (-\Delta) + \varepsilon(-\Delta_H)^{2}, \quad \varepsilon>0, \quad \nu >0.
		\end{align*}
Let $u_0\in L^2_{\sigma} (\Omega )$ and $f\in L^2(0,T;D(A^{-1/2}))$, where  $T\in (0,\infty)$.
	\begin{itemize}
		\item[(a)] Then in both cases $(i)$ and $(ii)$ there is a unique weak solution $u$ to \eqref{eq:NS}. Moreover, $u$ satisfies the strong energy equality and 
		\begin{align*}
		u\in L^2(0,T; D(A^{1/2})) \cap H^1(0,T; D(A^{-1/2})).
		\end{align*} 
		\item[(b)] Let $A=A_n$ be as in $(i)$ provided $\nu>0$ or as in $(ii)$, where $\varepsilon_n>0$, $n\in \N$. For given $u_0\in L^2_{\sigma} (\Omega )$ and $f\in L^2(0,T;H^{-1}_{per,\sigma}(\Omega))$ let
		$u_{\varepsilon_n}$ be the solution to \eqref{eq:NS} from $(a)$. If $\varepsilon_n\to 0$  as $n\to \infty$, then there exists a subsequence $(\varepsilon_{n_k})$ and a weak solution $u$ with the same data $u_0$ and $f$ to the Navier-Stokes equations, i.e., \eqref{eq:NS} with $A=-\Delta$, such that
		\begin{align*}
		\norm{u_{\varepsilon_{n_k}}-u}_{L^2(0,T;H^{1-\delta}(\Omega))} \to 0 \hbox{ as } k \to \infty, \quad \delta \in (0,1].
		\end{align*}  
	\end{itemize}
\end{theorem}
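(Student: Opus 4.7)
\emph{Plan of proof.} The argument follows the evolution-equation strategy outlined in the introduction. For part (a), given $u_0\in L^2_\sigma(\Omega)$ and $f\in L^2(0,T;D(A^{-1/2}))$, I would first construct a global \emph{strong} solution $u\in L^2(0,T;D(A^{1/2}))\cap H^1(0,T;D(A^{-1/2}))$ by combining the maximal $L^2$-regularity of the self-adjoint operator $A$ on the ground space $D(A^{-1/2})$ with a contraction argument in the corresponding maximal regularity class. The technical core is the quadratic estimate
\begin{equation*}
\norm{\cF_{NS}(u,u)}_{L^2(0,T;D(A^{-1/2}))} \leq C\,\norm{u}^2_{L^4(0,T;D(A^{1/4}))},
\end{equation*}
which reduces pointwise in time to $\norm{\cF_{NS}(u,u)}_{D(A^{-1/2})}\leq C\norm{u}^2_{D(A^{1/4})}$. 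In case (i), using $D(A^{1/4})=H^{5/8}_{per,\sigma}(\Omega)$, $D(A^{-1/2})=H^{-5/4}_{per,\sigma}(\Omega)$ and $\cF_{NS}(u,u)=\PP_\sigma\div(u\otimes u)$, this bound is obtained from the 3D Sobolev embeddings $H^{5/8}\hookrightarrow L^{24/7}$ and, dually, $L^{12/7}\hookrightarrow H^{-1/4}$. In case (ii) the anisotropic fractional domains, characterized in Fourier variables by the weight $|k_3|^2+|k_H|^{4}+1$, take over, and the estimate follows from anisotropic Ladyzhenskaya-type Sobolev inequalities that combine the weaker vertical $H^1_z$-control with the stronger horizontal regularity provided by $(-\Delta_H)^2$. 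Given the pointwise estimate, the mixed derivative theorem together with the time Sobolev embedding $H^{1/4}(0,T;X)\hookrightarrow L^4(0,T;X)$ places $\cF_{NS}(u,u)$ in the maximal regularity space of the right-hand side, and Banach's fixed-point theorem produces a local strong solution for each initial datum in $L^2_\sigma(\Omega)=(D(A^{-1/2}),D(A^{1/2}))_{1/2,2}$.

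Global existence then follows from the a priori energy estimate \eqref{eq:energyestimate}, which prevents blow-up in $L^2_\sigma$. Testing the equation by $u$ itself, admissible thanks to the strong-solution regularity, upgrades the energy inequality to the strong energy equality. Uniqueness is then obtained by weak-strong uniqueness: any weak solution $u$ sharing the data with the strong solution $\tilde u$ satisfies the strong energy inequality, while $\tilde u$ is regular enough to be used as a test function in the weak formulation. Subtracting the two relations and controlling the resulting trilinear error by
\begin{equation*}
\abs{\langle\cF_{NS}(u,u)-\cF_{NS}(\tilde u,\tilde u),u-\tilde u\rangle}\leq\tfrac12\norm{A^{1/2}(u-\tilde u)}^2_{L^2}+C\,\norm{\tilde u}^2_{D(A^{1/4})}\norm{u-\tilde u}^2_{L^2},
\end{equation*}
with $\norm{\tilde u}^2_{D(A^{1/4})}\in L^2(0,T)$ by the strong-solution regularity, Gronwall's lemma forces $u=\tilde u$.

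For part (b), the energy equality for $u_{\varepsilon_n}$ yields uniform bounds in $L^\infty(0,T;L^2_\sigma)\cap L^2(0,T;H^1_{per,\sigma})$ (using $\nu>0$ in case (i) or the $-\nu\Delta$ part in case (ii)) together with a uniform bound for $\sqrt{\varepsilon_n}\,u_{\varepsilon_n}$ in the hyperviscous norm. Reading $\partial_t u_{\varepsilon_n}$ off the equation produces a uniform bound in $L^{4/3}(0,T;H^{-s}_{per,\sigma})$ for $s$ large enough to dominate both the hyperviscous linear term in its dual form and the 3D nonlinearity. The Aubin-Lions-Simon lemma then furnishes a subsequence converging strongly in $L^2(0,T;L^2_\sigma)$, and interpolation against the uniform $L^2(0,T;H^1)$-bound upgrades this to strong convergence in $L^2(0,T;H^{1-\delta}_{per,\sigma})$ for every $\delta\in(0,1]$. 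Passing to the limit in the weak formulation is then direct: the hyperviscous term, rewritten as $\varepsilon_{n_k}\langle u_{\varepsilon_{n_k}},(-\Delta)^l\varphi\rangle$ (or with $\Delta_H$), vanishes since $\varepsilon_{n_k}\to 0$ against a fixed smooth test function, while the nonlinear term passes by strong $L^2L^2$-convergence combined with the weak $L^2H^1$-convergence of the gradient. Lower semicontinuity transfers the strong energy inequality to the limit $u$, which is therefore a Leray-Hopf weak solution of the classical Navier-Stokes equations.

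The principal obstacle will be the quadratic estimate in case (ii): since hyper-viscosity acts only in the horizontal directions, vertical derivatives of $u$ retain only $H^1_z$-control, so the trilinear term must be decomposed along horizontal and vertical derivatives and each piece estimated by an anisotropic Ladyzhenskaya-type inequality trading the stronger horizontal regularity from $(-\Delta_H)^2$ against the weaker vertical one. Once this anisotropic bound is in place, the remainder of the proof is a careful but standard execution of evolution-equation and compactness techniques.
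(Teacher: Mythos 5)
Your overall strategy coincides with the paper's: maximal $L^2$-regularity on the extrapolated ground space $D(A^{-1/2})$, the quadratic estimate $\norm{\PP_\sigma\div(u\otimes u)}_{D(A^{-1/2})}\leq C\norm{u}^2_{D(A^{1/4})}$ with exactly the embeddings you name in case $(i)$ and an anisotropic/mixed-derivative argument in case $(ii)$, the energy equality by testing the strong solution with itself, weak--strong uniqueness via Gr\"onwall, and Aubin--Lions compactness for part $(b)$. Those parts are in order; the only cosmetic differences in $(b)$ are that the paper bounds $\partial_t u_{\varepsilon_n}$ in $L^{2}(0,T;H^{-s})$, $s>5/2$, rather than your $L^{4/3}$, and applies Aubin--Lions directly with $X_0=H^1_{per,\sigma}$, $X=H^{1-\delta}_{per,\sigma}$, $X_1=H^{-s}_{per,\sigma}$ instead of interpolating from $L^2L^2$-convergence; either route works.

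The genuine gap is the globalization step in part $(a)$, which you dispatch with ``the a priori energy estimate prevents blow-up in $L^2_\sigma$.'' The fixed-point argument here is run in the \emph{critical} setting: the trace space is $X_{1/2}=L^2_\sigma$ and the nonlinearity is controlled through the borderline chain $\IE_1(T)\hookrightarrow H^{1/4}(0,T;D(A^{1/4}))\hookrightarrow L^4(0,T;D(A^{1/4}))$, whose constants do not improve as $T\to 0$. Consequently the local existence time is governed by the smallness of $\norm{u_0^*}_{\IE_1(T^*)}$, which tends to zero as $T^*\to 0$ for each fixed datum but \emph{not} uniformly on bounded sets of $L^2_\sigma$; a bound on $\norm{u(t)}_{L^2}$ alone therefore does not yield a uniform lower bound on the continuation time, and the standard ``no blow-up of the norm'' continuation argument does not close. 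The paper supplies the missing ingredient by an explicit Fourier computation showing that $\norm{(A+1)e^{-tA}u(\tau)}^2_{\IE_0(I)}$ is controlled by $C\,|I|\,(\norm{u_0}^2_{L^2}+\norm{f}^2_{L^2(0,T;D(A^{-1/2}))})$ uniformly in the restart time $\tau$ (using the energy estimate), combined with a partition of $(0,T)$ into finitely many intervals on which $\norm{f}_{\IE_0}$ is small; this gives a uniform time step and global existence by induction in finitely many steps. You need an argument of this kind. Relatedly, you identify the case-$(ii)$ nonlinear estimate as the principal obstacle, but in the paper that is a short application of anisotropic H\"older and the mixed derivative theorem ($D(A^{1/4})\hookrightarrow H_z^{1/4}H_{xy}^{1/2}$); the continuation argument is where the real work lies.
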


Correspondingly for the  hyper-viscous primitive equations one has
\begin{theorem}[Hyper-viscous primitive equations]\label{thm:hpe}
	Consider the hyper-viscous primitive equations \eqref{eq:PE} 
with
\begin{align*}
		&(i)\quad A= \nu (-\Delta) + \varepsilon(-\Delta)^{8/5}, \quad \varepsilon>0, \quad \nu \geq 0 \quad \hbox{or} \\
		 &(ii)\quad A= \nu (-\Delta) + \varepsilon(-\Delta_H)^{2}, \quad \varepsilon>0, \quad \nu >0.
		\end{align*}	
	Let $v_0\in L^2_{\overline{\sigma}} (\Omega )$ and $f\in L^2(0,T;D(A^{-1/2}))$, where  $T\in (0,\infty)$.
	\begin{itemize}
		\item[(a)]  Then in both cases $(i)$ and $(ii)$ there is a unique weak solution $v$ to \eqref{eq:PE}. Moreover, $v$ satisfies the strong energy equality and 
		\begin{align*}
		v\in L^2(0,T; D(A^{1/2})) \cap H^1(0,T; D(A^{-1/2})).
		\end{align*}  
		\item[(b)]  Let $A=A_n$ be as in $(i)$ provided $\nu>0$ or as in $(ii)$, where $\varepsilon_n>0$, $n\in \N$. For given $v_0\in L^2_{\overline{\sigma}} (\Omega )$ and $f\in L^2(0,T;H^{-1}_{per,\bar\sigma}(\Omega))$
		let $v_{\varepsilon_n}$ be the solution to \eqref{eq:PE} from $(a)$. If $\varepsilon_n\to 0$  as $n\to \infty$, then there exists a subsequence $(\varepsilon_{n_k})$ and a weak solution $v$ with the same data $v_0$ and $f$ to the primitive equations, i.e., \eqref{eq:PE} with $A=-\Delta$, such that
		\begin{align*}
		\norm{v_{\varepsilon_{n_k}}-v}_{L^2(0,T;H^{1-\delta}(\Omega))} \to 0 \hbox{ as } k \to \infty, \quad \delta \in (0,1].
		\end{align*}  
	\end{itemize}
\end{theorem}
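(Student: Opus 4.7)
The plan is to follow the evolution-equation strategy outlined in the introduction uniformly for cases (i) and (ii): construct local strong solutions via maximal $L^2$-regularity in the extrapolation scale based at $D(A^{-1/2})$, globalize them by the $L^2$-energy identity, deduce uniqueness of Leray-Hopf solutions by a weak-strong argument, and handle the vanishing hyper-viscosity limit by a compactness argument. In both cases, $A$ realises as a positive self-adjoint operator on $D(A^{-1/2})$ with domain $D(A^{1/2})$, hence has bounded $H^\infty$-calculus and in particular maximal $L^2$-regularity on $\mathbb{E}_T := H^1(0,T;D(A^{-1/2})) \cap L^2(0,T;D(A^{1/2}))$.

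The central ingredient is a bilinear estimate of the form
\[
\|\cF_{PE}(v,v')\|_{D(A^{-1/2})} \le C\,\|v\|_{D(A^{1/4})}\,\|v'\|_{D(A^{1/4})},
\]
which together with the mixed-derivative embedding $\mathbb{E}_T \hookrightarrow L^4(0,T;D(A^{1/4}))$ places $\cF_{PE}(v,v)$ into $L^2(0,T;D(A^{-1/2}))$ and closes a contraction fixed point in $\mathbb{E}_T$. I would prove this estimate by duality: test against $\varphi\in D(A^{1/2})$ and use
\[
\langle \cF_{PE}(v,v'),\varphi\rangle = -\int_\Omega v' \otimes u(v) : \nabla\varphi,
\]
which follows from integration by parts, $\div u(v)=0$, the periodicity in $x,y$ and the vanishing $w(v)|_{z=\pm 1}=0$, and then apply Hölder with (an)isotropic Sobolev embeddings for $D(A^{1/4})$ and $D(A^{1/2})$. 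In case (i) the exponent $l=8/5$ is exactly what is needed to absorb the extra loss caused by the non-local expression $w(v) = -\int_{-1}^z \div_H v$; in case (ii) one exploits instead the anisotropic structure of $D(A^{1/2})$, which embeds into $L^\infty_z L^2_{xy}\cap L^2_z L^\infty_{xy}$, to swallow both $v\cdot\nabla_H v$ and $w(v)\partial_z v$.

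For globalization, the strong solution in $\mathbb{E}_{T_0}$ satisfies the chain rule for $\tfrac{d}{dt}\|v\|_{L^2}^2$, so the $L^2$-energy identity holds for $v$. The resulting a priori bound \eqref{eq:energyestimate} excludes finite-time blow-up of the $L^2(0,T;D(A^{1/2}))$-norm and hence yields a global strong solution, which in turn establishes the regularity and the strong energy equality asserted in (a). For uniqueness in the Leray-Hopf class, the strong solution has the regularity $v\in L^4(0,T;D(A^{1/4}))$ needed to serve as a test function in the weak formulation of any other weak solution $\tilde v$ with the same data; subtracting that identity from the energy identity for $v$, controlling the cross terms by the same bilinear estimate, and invoking the strong energy inequality \eqref{eq:ei_pe} for $\tilde v$, a Gronwall argument forces $v=\tilde v$.

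For part (b), the $\nu(-\Delta)$-part of $A_n$ gives via \eqref{eq:energyestimate} an $n$-independent bound of $v_{\varepsilon_n}$ in $L^\infty(0,T;L^2_{\bar\sigma})\cap L^2(0,T;H^1_{per,\bar\sigma})$, together with the hyper-viscous bound $\sqrt{\varepsilon_n}\|A_n^{1/2} v_{\varepsilon_n}\|_{L^2(L^2)}\le C$. The bilinear estimate of (a) then gives a uniform bound of $\partial_t v_{\varepsilon_n}$ in $L^2(0,T;H^{-K}_{per,\bar\sigma})$ for $K$ large enough to absorb $(-\Delta)^{8/5}$ or $(-\Delta_H)^2$ uniformly in $n$. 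Aubin-Lions produces a subsequence converging strongly in $L^2(0,T;H^{1-\delta}_{per,\bar\sigma})$ and weakly in $L^2(0,T;H^1_{per,\bar\sigma})$ to some $v$. Strong convergence handles the nonlinear term, weak convergence the linear terms, and the hyper-viscous term vanishes distributionally since $\varepsilon_{n_k}\langle A_{n_k} v_{\varepsilon_{n_k}},\varphi\rangle\to 0$ for smooth $\varphi$. The energy inequality passes to the limit by weak lower semicontinuity, so $v$ is a Leray-Hopf weak solution of the primitive equations. The principal obstacle throughout is the bilinear estimate for $w(v)\partial_z v$: the non-locality of $w$ and the loss of one horizontal derivative force $l=8/5$ in case (i), explaining why this exceeds Lions' $5/4$ for Navier-Stokes, while in case (ii) only the anisotropic structure of $D(A^{1/2})$ makes the estimate close.
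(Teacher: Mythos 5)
Your overall architecture --- maximal $L^2$-regularity on the scale $D(A^{-1/2})\subset D(A^{1/2})$, globalization via the energy identity, weak--strong uniqueness, Aubin--Lions for the limit $\varepsilon\to 0$ --- is exactly the paper's. The genuine gap is your central bilinear estimate. You claim the \emph{symmetric} bound $\norm{\cF_{PE}(v,v')}_{D(A^{-1/2})}\leq C\norm{v}_{D(A^{1/4})}\norm{v'}_{D(A^{1/4})}$ and propose to prove it by duality plus H\"older and Sobolev embeddings. This cannot work for the primitive equations, because the factor $w(v)=-\int_{-1}^z\div_H v$ loses a full horizontal derivative: in case (i) one has $D(A^{1/4})=H^{4/5}_{per,\bar\sigma}(\Omega)$, so $\div_H v$ lands only in $L^2_zH^{-1/5}_{xy}$ and $w(v)$ is not a function to which H\"older can be applied; in case (ii) one has $D(A^{1/4})\hookrightarrow L^2_zH^{1}_{xy}\cap H^{1/2}_zL^2_{xy}$, so $w(v)\in L^\infty_zL^2_{xy}$ while $v'\in L^2_zH^1_{xy}\not\hookrightarrow L^2_zL^\infty_{xy}$, and the product $w(v)v'$ just misses $L^2(\Omega)$ --- the symmetric estimate is genuinely false at this borderline, not merely hard to prove. (This symmetric form is correct for Navier--Stokes, which is why Lions' exponent $5/4$ suffices there, but not for the primitive equations.)

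The paper's fix, which you need, is an \emph{asymmetric} estimate $\norm{\cF_{PE}(v,v')}_{X_0}\leq C\norm{v}_{X_{\beta_1}}\norm{v'}_{X_{\beta_2}}$ with $\beta_1+\beta_2=3/2$ but $\beta_1\neq\beta_2$: concretely $\norm{v}_{D(A^{13/32})}\norm{v'}_{D(A^{3/32})}$ in case (i) (i.e.\ $H^{13/10}\times H^{3/10}$, putting enough horizontal regularity on the factor inside $w(\cdot)$) and $\norm{v}_{D(A^{3/8})}\norm{v'}_{D(A^{1/8})}$ in case (ii). The fixed-point argument then closes not through $\IE_1(T)\hookrightarrow L^4(0,T;D(A^{1/4}))$ but through two different applications of the mixed derivative theorem, $\IE_1(T)\hookrightarrow H^{s_k}(0,T;X_{\beta_k})\hookrightarrow L^{p_k}(0,T;X_{\beta_k})$ with $\tfrac{1}{p_1}+\tfrac{1}{p_2}=\tfrac12$, followed by H\"older in time. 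The same asymmetric estimates (or the corresponding interpolation inequalities $\norm{\delta}_{D(A^{1/8})}\leq\norm{\delta}_{L^2}^{3/4}\norm{\delta}_{D(A^{1/2})}^{1/4}$ etc.) are what make the Gr\"onwall absorption work in your weak--strong uniqueness step, so the gap propagates there as well. Your globalization sketch is also somewhat glib --- one must check that the local existence time depends only on quantities controlled by the energy estimate, which the paper does via an explicit uniform partition of $[0,T]$ --- but that is a matter of detail rather than a wrong idea. Part (b) of your argument is sound and matches the paper.
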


\begin{remarks}
	\begin{itemize}
	    		\item[(a)] Corresponding statements hold for all higher powers in the hyper-viscosity  terms. For the primitive equations it seems that hyper-viscosity  is only needed for the part with vanishing vertical average, i.e., rewriting \eqref{eq:PE} as a system for $\bar v$ and $\tilde{v}$, cf. \cite{CaoTiti2007}, one has to add hyper-viscous terms only in the equation for $\tilde{v}$.
\item[(b)] In part $(a)$ with $A$ as in $(ii)$ one can consider equivalently $\nu \partial_z^2 + \varepsilon \Delta_H^2$ with $\nu, \varepsilon>0$.
	    \item[(c)] The case $T=\infty$ can be included if for instance the data are mean value free.
	    \item[(d)] Note that $L^2(0,T; D(A^{1/2})) \cap H^1(0,T; D(A^{1/2})) \hookrightarrow C^0([0,T]; L^2(\Omega))$, cf. \cite[III.3.5 eq. (3.81)]{PruessSimonett2016}, and therefore the $L^2$-initial condition  is well-defined. 
	    \item[(e)] Since the proof is based on the construction of strong solutions via maximal $L^2$-regularity, one can prove using the so-called 'parameter-trick', cf. \cite{GigaGriesHieberHusseinKashiwabara2017} and the references given therein, that solutions become real analytic in time and space for $t>0$ provided the force terms has this property. For the periodic setting the solution is real analytic even on the boundary.
	\end{itemize}
\end{remarks}
Optimality of these results is of course related to the question of uniqueness of weak solutions which is not known so far. Therefore it is discussed only with respect to the method applied here. 
Considering the primitive equations, one can estimate the non-linearity as follows 
\begin{align*}
\norm{\div u(v)\otimes v}_{H^{-1}}\leq \norm{u(v)\otimes v}_{L^{2}} \leq \norm{v}_{H^{1/2}}\norm{v}_{H^{3/2}}.
\end{align*}
With a hyper-viscosity $(-\Delta^{3/2})$ -- which is less than $(-\Delta)^{8/5}$ given in Theorem~\ref{thm:hpe} -- this yields for $v_0\in L^2_{\bar\sigma}(\Omega)$ local strong solutions $v$ to \eqref{eq:PE} in the time-weighted space
\begin{align*}
v\in L^{2}_{\mu}(0,T; H_{per,\bar{\sigma}}^{3}(\Omega)) \cap v\in H^{1}_{\mu}(0,T; H^{-1}_{per,\bar{\sigma}}(\Omega)) \quad \hbox{with} \quad\mu=9/10,
\end{align*}
where $H^{-1}_{per,\bar{\sigma}}(\Omega)$ is taken as ground space.
For the notion of time-weighted spaces compare \cite[II.3.2.4]{PruessSimonett2016}. 
Time-weighted $L^2$-spaces for $\mu\in (0,1)$ are larger than the non-weighted $L^2$-space, i.e., where $\mu=1$. In particular, one cannot use this $v$ directly as test-function to prove the energy-equality or weak-strong uniqueness.

Also, one can ask if only vertical hyper-viscosity of the form
\begin{align*}
-\Delta_H + (-\partial_z^2)^{l}, \quad l>1,
\end{align*}
 rather than only horizontal hyper-viscosity, can be sufficient. Considering the Navier-Stokes equations one estimates the non-linearity  by
\begin{align*}
\norm{\div u\otimes u}_{D(A^{-1/2})}\leq C\norm{\div u\otimes u}_{H^{-1}}\leq \norm{u\otimes u}_{L^{2}} \leq \norm{u}^2_{H_{z}^{1/4}H_{xy}^{1/2}}.
\end{align*} 
Since $D(A^{1/4}) \subset L^2_zH^{1/2}_{xy}$, there is no freedom left to estimate the  last term by applying the mixed derivative theorem, and therefore the strategy applied for the case of horizontal hyper-viscosity is not applicable here. For the case of the primitive equations one encounters a similar situation.

\section{Proofs}\label{sec:proofs}

\subsection{Some estimates}
Taking advantage of the product structure of $\Omega = (0,1)^2 \times (-1,1)$, one introduces for $r,s\geq 0$ and $1\leq p,q \leq \infty$ the spaces
\begin{eqnarray*}
	H_z^{r,q}H^{s,p}_{xy}:= H^{r,q}((-1,1);H^{s,p}((0,1)^2))
\end{eqnarray*}
equipped with the norm $\norm{v}_{H_z^{r,q}H^{s,p}_{xy}}:= \norm{\norm{v(\cdot,z)}_{H^{s,p}((0,1)^2)}}_{H^{r,q}(-1,1)}$, where $H^{s,p}((0,1)^2)$ denotes the  Bessel potential space on $(0,1)^2$ defined as restriction of the Bessel potential space on $\R^2$, and
$H^{r,q}(-1,1;X)$ for a Banach space $X$ denotes the vector valued Bessel potential space on $(-1,1)$. Recall that for $r=0$ one sets $H^{r,q}=L^q$ and for $r\in \N$ one has $H^{r,q}=W^{r,q}$. Note that 
$$
H^{r+s,p}(\Omega) \subset  H_z^{r,p}H^{s,p}_{x,y}.
$$ 
Applying H\"{o}lder's inequality separately for vertical and horizontal components delivers
\begin{eqnarray*}
	\norm{fg}_{L^{q}_z L_{xy}^{p}} \leq \norm{f}_{L_z^{q_1}L_{xy}^{p_1}} \norm{g}_{L_z^{q_2}L_{xy}^{p_2}}, \quad 
	\tfrac{1}{p}=\tfrac{1}{p_1}+\tfrac{1}{p_2}, \quad \tfrac{1}{q}=\tfrac{1}{q_1}+\tfrac{1}{q_2}.
\end{eqnarray*}
Moreover, one obtains the embedding properties   
\begin{eqnarray*}
	H_z^{r,q}H^{s,p}_{xy} \hookrightarrow H_z^{r^{\prime},q^{\prime}}H^{s,p}_{xy}, &\hbox{whenever} & H_z^{r,q} \hookrightarrow H_z^{r^{\prime},q^{\prime}}, \\
	H_z^{r,q}H^{s,p}_{xy} \hookrightarrow H_z^{r,q}H^{s^{\prime},p^{\prime}}_{xy}, &\hbox{whenever} & H^{s,p}_{xy} \hookrightarrow H^{s^{\prime},p^{\prime}}_{xy}.
\end{eqnarray*}
Furthermore, one has the classical Sobolev embeddings, and by duality these carry over to negative scales such that for $s\leq 0$
\begin{align*}
L^p(\Omega)\hookrightarrow  H^{s}_{per}(\Omega) \quad \hbox{if} \quad s-\tfrac{3}{2} \leq-\tfrac{3}{p}, \quad p\in [1,\infty).
\end{align*}

A crucial ingredient appearing in  most of the proofs is the following
\begin{theorem}[Mixed derivative theorem, cf. Corollary III.4.5.10 in \cite{PruessSimonett2016}]
Let $X$ be a Banach space. Suppose that $A\in \cH^{\infty}(X)$ and $B\in \cR\cS(X)$ are commuting operators such that $\phi_A^{\infty}+\phi_{B}^R<\pi$. Then $A^{\alpha}B^{1-\alpha}(A+B)$ is bounded for each for each $\alpha\in (0,1)$, in particular
\begin{align*}
D(A) \cap D(B)= D(A+B) \hookrightarrow D(A^{\alpha}B^{1-\alpha}), \quad \alpha\in (0,1).
\end{align*}
\end{theorem}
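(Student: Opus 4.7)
The plan is to reduce the statement to two classical ingredients from the theory of sectorial operators: the Kalton--Weis operator sum theorem and a joint functional calculus estimate for the symbol $f_\alpha(z,w) = z^\alpha w^{1-\alpha}/(z+w)$.

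First I would verify the operator sum identity $D(A+B) = D(A)\cap D(B)$ together with the two-sided norm equivalence $\norm{Ax} + \norm{Bx} \leq C\norm{(A+B)x}$ for $x \in D(A)\cap D(B)$. The standard route is to build $(A+B)^{-1}$ as a Dunford integral
\begin{align*}
(A+B)^{-1} = \frac{1}{2\pi i}\int_{\Gamma} (\lambda - A)^{-1}(\lambda + B)^{-1}\, d\lambda
\end{align*}
along a contour $\Gamma$ lying in the joint resolvent set of $A$ and $-B$; convergence in operator norm rests on R-boundedness of the family $\{\lambda(\lambda+B)^{-1}\}_{\lambda\in\Gamma}$ combined with the bounded $H^\infty$-calculus of $A$, and the angle assumption $\phi_A^\infty + \phi_B^R < \pi$ is precisely what makes the admissible region for $\Gamma$ nonempty.

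Next, for $\alpha \in (0,1)$ the symbol $f_\alpha(z,w)$ is uniformly bounded on the product of sectors $\Sigma_{\phi_A^\infty} \times \Sigma_{\phi_B^R}$, since the angle inequality keeps $|z+w|$ bounded below by a multiple of $|z|^\alpha |w|^{1-\alpha}$. I would then apply the Kalton--Weis joint functional calculus for commuting sectorial pairs to define $f_\alpha(A,B)$ as a bounded operator on $X$: concretely, represent $f_\alpha(A,B)$ as an iterated Dunford integral, exploit R-sectoriality of $B$ to turn the inner $\lambda$-integral into a uniformly bounded operator-valued holomorphic function of the outer variable, and then feed this into the bounded $H^\infty$-calculus of $A$. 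Since $f_\alpha(A,B) = A^\alpha B^{1-\alpha}(A+B)^{-1}$, the embedding $D(A+B) \hookrightarrow D(A^\alpha B^{1-\alpha})$ then follows by writing $A^\alpha B^{1-\alpha} x = f_\alpha(A,B)\,(A+B)x$ for $x \in D(A)\cap D(B)$.

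The main obstacle lies in this second step, which is the technical core of Kalton--Weis: handling a two-variable holomorphic functional calculus for a pair in which only one operator is assumed to admit a bounded $H^\infty$-calculus. R-sectoriality of $B$ (as opposed to mere sectoriality) is essential rather than cosmetic, because integrating an operator-valued symbol along a contour inside the Dunford calculus of $A$ requires the vector-valued Kalton--Weis multiplier theorem, whose premise is R-boundedness of resolvent families. Organizing the two-variable integral so that Fubini, resolvent identities, and closedness of the operators can all be applied without losing the operator-norm bound is where the argument becomes delicate; the angle condition $\phi_A^\infty + \phi_B^R < \pi$ reappears as the geometric ingredient guaranteeing sufficient decay of the integrand along the chosen contours.
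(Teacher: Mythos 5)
The paper does not prove this statement: it is imported verbatim from Pr\"uss--Simonett (Corollary III.4.5.10) and used as a black box, so there is no in-paper argument to compare against. Your sketch follows the standard route behind that corollary -- the Kalton--Weis operator-sum theorem for $D(A+B)=D(A)\cap D(B)$, boundedness of the symbol $f_\alpha(z,w)=z^\alpha w^{1-\alpha}/(z+w)$ on the product of sectors via $|z+w|\geq c(|z|+|w|)\geq c|z|^\alpha|w|^{1-\alpha}$, and the operator-valued $H^\infty$-calculus of $A$ applied to $z\mapsto z^\alpha B^{1-\alpha}(z+B)^{-1}$ -- and the outline is sound. Two points where the sketch is looser than the actual argument: first, in the middle step you describe the inner integral as producing a \emph{uniformly bounded} operator-valued holomorphic function, but what the Kalton--Weis operator-valued calculus of $A$ actually requires is that the range $\{z^\alpha B^{1-\alpha}(z+B)^{-1}\}$ be \emph{R-bounded}, which one gets from the R-sectoriality of $B$ together with the convex-hull/integral stability of R-bounds (you do acknowledge this in your closing paragraph, but the two statements should be reconciled); second, the Da Prato--Grisvard contour integral for $(A+B)^{-1}$ does not converge absolutely in operator norm in general, so the closedness of $A+B$ and the estimate $\|Ax\|+\|Bx\|\leq C\|(A+B)x\|$ are more cleanly obtained by applying the same operator-valued calculus to $z\mapsto B(z+B)^{-1}$ rather than by direct norm convergence of the Dunford integral. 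Neither issue changes the architecture of the proof, which is the one in the cited reference.
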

For the notions $\cH^{\infty}(X),\cR\cS(X)$ and $\phi_A^{\infty}, \phi_{B}^R$ see \cite{PruessSimonett2016}. The mixed derivative theorem can be applied to time-space situations to prove the embedding
\begin{align}\label{eq:mdt_ts}
L^2(0,T;D(A))\cap H^{1}(0,T;X_0) \hookrightarrow H^{\theta}(0,T; D(A^{1-\theta})), \quad \theta\in (0,1),
\end{align}
where one sets $B=\partial_t$ on $L^2(\R;X_0)$ and uses an extension operator from $E_T\colon H^1(0,T;X_0)\rightarrow H^1(\R;X_0)$. Moreover, the assumptions of the mixed derivative theorem are satisfied for non-negative self-adjoint operators such as $(-\partial_z^2)^r$ and $(-\Delta_H)^s$,  defined on $L^2(\Omega)$ such that
\begin{align}\label{eq:mdt:xyz}
 H_z^rL_{xy}^2 \cap L_z^2H_{xy}^s \hookrightarrow H_z^{\theta r}H_{xy}^{(1-\theta) s}, \quad \theta \in (0,1), \quad r,s\geq 0.
\end{align}
Throughout the proofs generic constants $c,C>0$ are used.
 
\subsection{Semi-linear evolution equations and maximal $L^2$-regularity}
Consider a linear evolution equation 
\begin{align}\label{eq:lin}
\partial_t \psi + \cA \psi =  f, \quad \psi(0)=\psi_0,
\end{align}
in the separable Hilbert space $X_0$, where $\cA\colon X_1\subset X_0 \rightarrow X_0$ be closed and densely defined. For $T\in (0,\infty)$ one defines for $p\in (1,\infty)$ the $L^p$-ground space and the maximal $L^p$-regularity spaces by
\[
\IE _0(T):=L^p(0,T;X_0) \hbox{ and } \IE _1(T):=L^p(0,T;X_1)\cap H^{1,p}(0,T;X_0) \hookrightarrow C^0([0,T],X_{1-1/p,p}),
\]
respectively, with $X_{1-1/p,p}=(X_1,X_0)_{1-1/p,p}$, where $(\cdot,\cdot)_{\theta,p}$, $\theta\in (0,1)$, $p\in (1,\infty)$,  denotes the real interpolation functor.

One says that $\cA$ has \textit{maximal $L^p$-regularity} if for each $\psi_0\in X_{1-1/p,p}$
and $f\in \IE_0(T)$ there is a unique solution $\psi\in \IE_1(T)$ to \eqref{eq:lin} and $\norm{u}_\IE(T)\leq C (\norm{\psi_0}_{X_{1-1/p,p}}+ \norm{f}_{\IE_0(T)})$, $C>0$. Note that $C=C(T)$ can be chosen uniformly for all $0<T'\leq T$. In the following it is assumed that $p=2$ if not mentioned otherwise.

\begin{remark}[Maximal $L^2$-regularity in Hilbert spaces]\label{rem:Hilbert}
	The theory of maximal $L^p$-regularity extends to Banach spaces, cf. \cite{PruessSimonett2016} and the references therein. However, in the Hilbert space case there are some simplifications.	
	\begin{itemize}
		\item[(a)]  $\cA$ has maximal $L^2$-regularity for $T\in (0,\infty)$ if and only if it is the generator of a bounded analytic $C_0$-semigroup, cf. \cite[Theorem II.3.5.7]{PruessSimonett2016}.  
		\item[(b)] For the real interpolation functor $(\cdot,\cdot)_{\theta,p}$, one has 
		$$(X_1,X_0)_{\theta,2}=[X_1,X_0]_{\theta}=:X_{\theta}, \quad \theta\in (0,1),$$ where   $[\cdot,\cdot]_{\theta}$ denotes the complex interpolation functor, and if $\cA$ has a bounded $H^{\infty}$-calculus then $D(\cA^{\theta})=X_{\theta}$, $\theta\in (0,1)$, cf. \cite[Theorem 4.3.11]{Lunardi}. Hence $X_{1-1/2,2}=X_{1/2}$.
	\end{itemize}
	These statements apply in particular to non-negative, self-adjoint operators $\cA$.
\end{remark}

Consider the semi-linear problem for a given  bi-linearity $\cF(\cdot,\cdot)$
\begin{align}\label{eq:semilin}
\partial_t \psi + \cA \psi =  \cF(\psi,\psi) + f, \quad \psi(0)=\psi_0.
\end{align}
Here, a \emph{strong solution to \eqref{eq:semilin} on $(0,T^*)$} for $T^*\in (0,T]$, is a function $\psi\in \IE _1(T^*)$ such that  \eqref{eq:semilin} holds almost everywhere with $\psi(0)=\psi_0$.
Local strong well-posedness is proven by applying the contraction mapping principle. To this end, 
define the reference function $\psi_0^*\in \IE_1(T)$ for $\psi_0\in X_{1/2}$ as the solution of the inhomogeneous linear problem
	\begin{align*}
	\partial_t \psi +\cA \psi = f, \qquad \psi(0)=\psi_0, \quad \hbox{i.e.,} \quad  \psi_0^*(t)= e^{-t\cA}\psi_0 + \int_0^t e^{-(t-s)\cA}f(s) ds,
	\end{align*}
	and for $T^*\in (0,T]$,  $r>0$ the ball 
	\begin{align*}
	\mathds{B}_{r,T^*,\psi_0} := \{\psi\in \IE_1(T) \colon \psi(0)=\psi_0 \hbox{ and } \norm{\psi - \psi_0^*}_{\IE(T^*)} \leq r   \}  \subset\IE_1(T).
	\end{align*}
\begin{proposition}[Local well-posedness, compare \cite{PruessWilke2016}]\label{prop:loc_ex}
Let $\cA$ have maximal $L^2$-regularity, and $\cF(\cdot,\cdot)$ be bi-linear such that for some $C>0$ 
\begin{align*}
\norm{\cF(\psi,\psi')}_{X_0}\leq C \norm{\psi}_{X_{\beta_1}}\norm{\psi'}_{X_{\beta_2}}, \qquad \psi,\psi'\in X_1,
\end{align*}
where $\beta_1,\beta_2\in (1/2,1)$ with $\beta_1+\beta_2=3/2$.
Then for each 
\begin{align*}
\psi_0\in X_{1/2} \quad \hbox{and} \quad f\in \IE_0(T),
\end{align*}
there is  unique local solution $u\in \IE_1(T^*)$ to \eqref{eq:semilin} for some $T^*\in (0,T]$.
Moreover, there is a constant $C>0$ such that $\psi\in \mathds{B}_{r,T^*,\psi_0}$ if $r\leq 1/4C$ and $T^*$ is such that \begin{align*}
\norm{\psi_0^*}^2_{\IE_1(T)}< B(r,C):=\min\{r/4C, (r/4C)^2, (1/4C)^2\}.
\end{align*}
\end{proposition}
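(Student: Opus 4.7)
The plan is to apply Banach's fixed point theorem to the solution map $\Phi\colon\mathds{B}_{r,T^*,\psi_0}\to\IE_1(T^*)$ defined by $\Phi(\psi)=\tilde\psi$, where $\tilde\psi$ is the unique solution of the inhomogeneous linear problem $\partial_t\tilde\psi+\cA\tilde\psi=\cF(\psi,\psi)+f$ with $\tilde\psi(0)=\psi_0$. Because $\cA$ has maximal $L^2$-regularity and $\psi_0\in X_{1/2}=X_{1-1/2,2}$, such a $\tilde\psi$ exists in $\IE_1(T^*)$ as soon as $\cF(\psi,\psi)\in\IE_0(T^*)$; subtracting the reference function $\psi_0^*$ cancels both data, and the maximal regularity estimate applied with zero initial value and right-hand side $\cF(\psi,\psi)$ yields
$$\|\tilde\psi-\psi_0^*\|_{\IE_1(T^*)}\le C_{MR}\,\|\cF(\psi,\psi)\|_{\IE_0(T^*)}$$
with $C_{MR}$ uniform in $T^*\le T$.

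The crucial step, where the assumption $\beta_1+\beta_2=3/2$ enters, is a quadratic bound on the nonlinearity in the ground-space norm. Setting $\theta_i:=1-\beta_i\in(0,1/2)$ and $p_i$ via $1/p_i=1/2-\theta_i=\beta_i-1/2$, one checks that $1/p_1+1/p_2=\beta_1+\beta_2-1=1/2$. The mixed derivative theorem in the time-space form \eqref{eq:mdt_ts} gives $\IE_1(T^*)\hookrightarrow H^{\theta_i}(0,T^*;X_{\beta_i})$, and the scalar Sobolev embedding $H^{\theta_i}(0,T^*)\hookrightarrow L^{p_i}(0,T^*)$ is available precisely because $\theta_i<1/2$. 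Composing these embeddings, applying H\"older's inequality in time with conjugate exponents $p_1/2$ and $p_2/2$, and invoking the bilinear hypothesis on $\cF$ yields
$$\|\cF(\psi,\psi)\|_{\IE_0(T^*)}\le C\,\|\psi\|_{\IE_1(T^*)}^2,$$
and by the polarization $\cF(\psi,\psi)-\cF(\psi',\psi')=\cF(\psi-\psi',\psi)+\cF(\psi',\psi-\psi')$ the analogous Lipschitz bound
$$\|\cF(\psi,\psi)-\cF(\psi',\psi')\|_{\IE_0(T^*)}\le C\bigl(\|\psi\|_{\IE_1(T^*)}+\|\psi'\|_{\IE_1(T^*)}\bigr)\|\psi-\psi'\|_{\IE_1(T^*)}.$$

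For $\psi\in\mathds{B}_{r,T^*,\psi_0}$ one has $\|\psi\|_{\IE_1(T^*)}\le r+\|\psi_0^*\|_{\IE_1(T^*)}$, so combining the above with the maximal regularity bound and absorbing $C_{MR}$ into the generic constant $C$ gives
$$\|\Phi(\psi)-\psi_0^*\|_{\IE_1(T^*)}\le C\bigl(r+\|\psi_0^*\|_{\IE_1(T^*)}\bigr)^2$$
and an analogous estimate for $\|\Phi(\psi)-\Phi(\psi')\|_{\IE_1(T^*)}$. Elementary inequalities then show that $\Phi$ is both a self-map and a strict contraction on $\mathds{B}_{r,T^*,\psi_0}$ under the smallness conditions $r\le 1/(4C)$ and $\|\psi_0^*\|^2_{\IE_1(T^*)}<B(r,C)$ stated in the proposition. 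Banach's fixed point theorem then delivers the unique local strong solution.

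The remaining point to verify is that some $T^*\in(0,T]$ satisfying $\|\psi_0^*\|^2_{\IE_1(T^*)}<B(r,C)$ actually exists. This follows from the absolute continuity of the integrals defining $\|\cdot\|_{\IE_1(T^*)}$ together with $\psi_0^*\in\IE_1(T)$, which is guaranteed by applying maximal $L^2$-regularity once to the data $(\psi_0,f)$; hence $\|\psi_0^*\|_{\IE_1(T^*)}\to 0$ as $T^*\to 0+$. The one genuinely non-routine step is the quadratic nonlinearity estimate: without the exact balance $\beta_1+\beta_2=3/2$, the H\"older--Sobolev arithmetic $1/p_1+1/p_2=1/2$ fails and one cannot close the fixed point scheme with $\psi_0\in X_{1/2}$ as initial data.
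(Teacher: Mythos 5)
Your proposal is correct and follows essentially the same route as the paper's proof: the same reference function $\psi_0^*$ and ball $\mathds{B}_{r,T^*,\psi_0}$, the same chain of mixed derivative theorem, Sobolev embedding in time, and H\"older's inequality with $1/p_1+1/p_2=1/2$ to get the quadratic bound $\norm{\cF(\psi,\psi)}_{\IE_0}\leq C\norm{\psi}_{\IE_1}^2$, and the same contraction argument under the stated smallness conditions on $r$ and $\norm{\psi_0^*}_{\IE_1(T^*)}$. The only cosmetic differences are your asymmetric polarization identity (the paper uses a symmetrized version with a factor $1/2$) and your explicit remark that $\norm{\psi_0^*}_{\IE_1(T^*)}\to 0$ as $T^*\to 0+$, which the paper leaves implicit at this stage.
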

\begin{proof}
	The proof consists of a slight modification of the proof in \cite{PruessWilke2016}, where $\norm{\cF(\psi,\psi)-\cF(\psi',\psi')}_{X_0}\leq C \norm{\psi-\psi'}_{X_{\beta_1}}(\norm{\psi}_{X_{\beta_2}}+\norm{\psi'}_{X_{\beta_2}})$ is assumed. Here, one estimates for $p_k$, $k=1,2$, with
	$\frac{1}{p_k} = \beta_k-\frac{1}{2}$, where by assumption $\frac{1}{p_1}+\frac{1}{p_2}=\frac{1}{2}$, using first H\"older's inequality
	\begin{align*}
	\norm{\cF(\psi,\psi')}_{\IE_0(T)} &\leq C \norm{\psi}_{L^{p_1}(0,T;X_{\beta_1})} \norm{\psi'}_{L^{p_2}(0,T;X_{\beta_2})}  \\
	&\leq C \norm{\psi}_{H^{s_1}(0,T;X_{\beta_1})} \norm{\psi'}_{H^{s_2}(0,T;X_{\beta_2})}  \\
	&\leq C \norm{\psi}_{\IE_1(T)}\norm{\psi'}_{\IE_1(T)}, 
	\end{align*} 
		using second the embedding $H^{s_k}(0,T;X_{\beta_k}) \hookrightarrow L^{p_k}(0,T;X_{\beta_k})$ for $s_k=\frac{1}{2}-\frac{1}{p_k}$, $k=1,2$, and finally, the mixed derivative theorem $\IE_1(T)\hookrightarrow H^{\theta}(0,T; X_{1-\theta})$, $\theta\in (0,1)$, for $\theta = s_k=1-\beta_k$, $k=1,2$, cf. equation \eqref{eq:mdt_ts}. By density it follows that this estimate holds for all $\psi,\psi'\in \IE_1(T)$.
		
		Note that for a quadratic non-linearity, it is sufficient to estimate $\cF(\psi,\psi')$ since 
		\begin{align*}
		\cF(\psi,\psi)-\cF(\psi',\psi') = \tfrac{1}{2}\left(\cF(\psi-\psi',\psi) + \cF(\psi, \psi-\psi') + \cF(\psi-\psi',\psi') + \cF(\psi', \psi-\psi')  \right).
		\end{align*}
		
	Now one can proceed analogously to \cite{PruessWilke2016}. 
	Consider the map
	\begin{align*}
	\cT_{\psi_0} \colon  \mathds{B}_{r,T^*,\psi_0} \rightarrow \IE_1(T),  \quad \cT_{\psi_0} h = \psi,
	\end{align*}
	where $\psi$ is the unique solution in $\IE_1(T)$ to the linear problem 
	\begin{align*}
	\partial_t \psi + \cA\psi  = f + \cF(h,h), \qquad \psi(0)=\psi_0.
	\end{align*}	
	To prove that for suitable $r>0$ and $T^*\in (0,T]$ this defines a self-mapping, one estimates 
	\begin{align*}
	\norm{\psi-\psi_0^*}_{\IE_1(T)} &\leq C \norm{\cF(h,h)}_{\IE_0(T)} \leq  C \norm{(h-\psi_0^*)+\psi_0^*}^2_{\IE_1(T)} \\
	&\leq C \left( \norm{(h-\psi_0^*)}^2_{\IE_1(T)} + 2\norm{(h-\psi_0^*)}_{\IE_1(T)} \norm{\psi_0^*}_{\IE_1(T)} + \norm{\psi_0^*}^2_{\IE_1(T)}\right)\\
	&\leq r C \left(r + 2 \norm{\psi_0^*}_{\IE_1(T)} + \norm{\psi_0^*}^2_{\IE_1(T)}/r\right),
	\end{align*}	
	for $\psi=\cT_{\psi_0} h$ and  $h\in \mathds{B}_{r,T^*,\psi_0}$, and to prove contractivity, one estimates for $\psi=\cT_{\psi_0} h$ and $\psi'=\cT_{\psi_0} h'$, where $h,h'\in \mathds{B}_{r,T^*,\psi_0}$,
	\begin{align*}
	\norm{\psi-\psi'}_{\IE_1(T)} &\leq C \norm{\cF(h,h)-\cF(h',h')}_{\IE_0(T)} \\
	&\leq C \norm{h -h'}_{\IE_1(T)} \left(\norm{h-\psi_0^*}_{\IE_1(T)}+  \norm{h'-\psi_0^*}_{\IE_1(T)} + 2\norm{\psi_0^*}_{\IE_1(T)}\right)\\
	&\leq 2 C \norm{h -h'}_{\IE_1(T)} \left( r +  \norm{\psi_0^*}_{\IE_1(T)}/r \right),
	\end{align*}	
	Assuming $r>0$ and $T^*\in (0,T]$ to be sufficiently small to satisfy the formulated conditions, one has
\begin{align*}
r + 2 \norm{\psi_0^*}_{\IE_1(T)} + \norm{\psi_0^*}^2_{\IE_1(T)}/r < 1/C
\hbox{ and }  r +  \norm{\psi_0^*}_{\IE_1(T)}/r <1/2C.
\end{align*}		
Hence, the mapping $\cT_{h_0}$ restricts to a contractive self-mapping on $\mathds{B}_{r,T^{\ast},\psi_0}$ the fixed point of which is the unique local solution to  \eqref{eq:semilin}.
\end{proof}

\subsection{Strong solution for initial data in $L^2$} 
Here, one considers with $A$ as in Theorems~\ref{thm:hns} and~\ref{thm:hpe} the spaces
\begin{align*}
X_0=D(A^{-1/2}) \quad \hbox{and} \quad X_1=D(A^{1/2}). 
\end{align*}
and solves \eqref{eq:Fform1} with the non-linearities given in \eqref{eq:Fform2}. 
There are many other methods available to construct strong solutions  such as Giga, Kato or Fujita-Kato iteration schemes. Maximal $L^2$-regularity is used here for the control of the regularity class which is suitable to use solutions as test-functions for weak solutions.

\begin{lemma}[Self-adjointness for the linear part]\label{lem:max_reg_Lapalce}
	The operators $A$ in Theorems~\ref{thm:hns} and~\ref{thm:hpe} $(i)$ and $(ii)$ considered as operators in $D(A^{-1/2})$ with domain $D(A^{1/2})$
are self-adjoint. 
\end{lemma}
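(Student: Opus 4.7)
The plan is to exploit the fact that, thanks to periodicity, both choices of $A$ in Theorems~\ref{thm:hns} and~\ref{thm:hpe} are Fourier multipliers commuting with the Helmholtz (resp.\ hydrostatic Helmholtz) projection, hence on the solenoidal (resp.\ hydrostatically solenoidal) subspace they act as multiplication by a real, non-negative symbol $p(k)$. Self-adjointness on each rung $D(A^s)$ of the associated Hilbert scale is then a routine matter.

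More concretely, in case $(i)$ the symbol is $p(k)=\nu|k|^2+\varepsilon|k|^{2l}$ and in case $(ii)$ it is $p(k)=\nu|k|^2+\varepsilon|k_H|^{2l}$; both are real and non-negative. First I would record that $A$ is self-adjoint on $L^2_\sigma(\Omega)$ (resp.\ $L^2_{\bar\sigma}(\Omega)$) with domain $D(A)$: after Fourier series decomposition (and restriction to the relevant solenoidal subspace, on which $\PP_\sigma$ or $\PP_{\bar\sigma}$ reduces to the identity in the multiplier picture) the operator is unitarily equivalent to multiplication by the real measurable function $k\mapsto p(k)$ on the weighted $\ell^2$-space encoding the solenoidal constraint, and multiplication by a real function is always self-adjoint with its natural maximal domain.

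Second, to pass to the extrapolation scale, I would invoke the Hilbert-scale principle encoded in the Fourier description of $D(A^s)$ recalled in Section~\ref{sec:pre}: the operator $J:=(I+A)^{1/2}$, which on the Fourier side is multiplication by $(1+p(k))^{1/2}$, is for every $s\in\R$ a unitary isomorphism $D(A^{s+1/2})\to D(A^s)$ with respect to the natural norms $\sum_k(1+p(k))^{2s}|\hat u(k)|^2$. Conjugating the extrapolated operator $A\colon D(A^{1/2})\subset D(A^{-1/2})\to D(A^{-1/2})$ by $J$ therefore yields exactly $A\colon D(A)\subset L^2\to L^2$, which was just shown to be self-adjoint; self-adjointness is preserved by unitary equivalence, hence the claim follows. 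As a double-check, one can verify the two defining properties directly from the multiplier picture: symmetry amounts to $\sum_k(1+p(k))^{-1}p(k)\,\overline{\hat u(k)}\hat v(k)=\sum_k(1+p(k))^{-1}\overline{\hat u(k)}\,p(k)\hat v(k)$, which holds because $p(k)\in\R$, and surjectivity of $I+A\colon D(A^{1/2})\to D(A^{-1/2})$ is visible from solving $\hat u(k)=\hat f(k)/(1+p(k))$ and noting that this $u$ lies in $D(A^{1/2})$ whenever $f\in D(A^{-1/2})$.

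The only point requiring mild attention is the possible kernel of $A$ (e.g.\ the $k=0$ mode when $\nu=0$ in $(i)$, or the modes with $k_H=0$ in $(ii)$ when one allows $\nu=0$, which however is excluded here); this is absorbed by the $+1$ in the weight $(1+p(k))$ used to define the norms on $D(A^s)$, so no genuine obstacle arises and the lemma is essentially a transcription of the standard fact that a non-negative self-adjoint operator in a Hilbert space remains self-adjoint on every rung of its interpolation-extrapolation scale.
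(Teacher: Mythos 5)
Your argument is correct and follows essentially the same route as the paper: self-adjointness on $L^2$ via the Fourier-series spectral resolution (multiplication by a real non-negative symbol), restriction to the invariant solenoidal subspace, and transfer to the interpolation--extrapolation scale. The only difference is that you spell out the last step (unitary equivalence via $(I+A)^{1/2}$ between the extrapolated operator and the original one), which the paper simply asserts; this is a welcome elaboration, not a divergence.
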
 
\begin{proof}
	Considering the case of the Navier-Stokes equations one deduces that the operators $A$ as operators in $L^2(\Omega)^3$ are self-adjoint since their spectral resolution is given by the Fourier series expansion. Since $L^2_{\sigma}(\Omega)$ is an invariant subspace of $L^2(\Omega)^3$  with respect to $A$, also the restricted operator is self-adjoint. Hence also the operator on the interpolation-extrapolation scale is self-adjoint. For the case of the primitive equations analogous arguments hold. 
\end{proof}

\begin{lemma}[Estimates on the non-linearity for the hyper-viscous Navier-Stokes equations] \label{lem:nonlin_NS} Let $A$ be as in Theorem~\ref{thm:hns} $(i)$ or $(ii)$, then
		\begin{align*}
		\norm{\PP_{\sigma} \div (u\otimes u)}_{D(A^{-1/2})} \leq \norm{u}^2_{D(A^{1/4})}, \quad u \in D(A^{1/4}).
		\end{align*}
\end{lemma}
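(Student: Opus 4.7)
The plan is to use duality to reduce the claim to a bilinear estimate on the symmetric tensor $u\otimes u$ tested against $\nabla\varphi$, and then to apply Sobolev embeddings (case $(i)$) or the mixed derivative theorem together with separate one- and two-dimensional Sobolev embeddings (case $(ii)$).

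First, since $A$ is self-adjoint from $D(A^{1/2})$ into $D(A^{-1/2})$ by Lemma~\ref{lem:max_reg_Lapalce}, and since $\PP_\sigma$ commutes with $A$ and is self-adjoint on $L^2$, duality combined with an integration by parts (no boundary contributions due to periodicity) yields
\[
\norm{\PP_\sigma \div(u\otimes u)}_{D(A^{-1/2})} = \sup_{\norm{\varphi}_{D(A^{1/2})}\leq 1} \abs{\langle u\otimes u,\nabla\varphi\rangle}.
\]
It thus suffices to estimate this bilinear form in each of the two cases.

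In case $(i)$, the Fourier characterisation of $A$ gives $D(A^{1/2}) = H^{5/4}_{per,\sigma}(\Omega)$ and $D(A^{1/4}) = H^{5/8}_{per,\sigma}(\Omega)$, as the symbol of $A^{1/2}$ is equivalent to $|k|^{5/4}$ at high frequencies. Combining Hölder with the borderline three-dimensional Sobolev embeddings $H^{5/8}(\Omega)\hookrightarrow L^{24/7}(\Omega)$ and $H^{1/4}(\Omega)\hookrightarrow L^{12/5}(\Omega)$, and observing that $2\cdot 7/24 + 5/12 = 1$, I obtain
\[
\abs{\langle u\otimes u,\nabla\varphi\rangle} \leq \norm{u}^2_{L^{24/7}} \norm{\nabla\varphi}_{L^{12/5}} \leq C\norm{u}^2_{D(A^{1/4})} \norm{\varphi}_{D(A^{1/2})}.
\]
In case $(ii)$, the symbol $\nu|k|^2 + \varepsilon|k_H|^4$ dominates $|k|^2$ (using $\nu>0$), so $D(A^{1/2})\hookrightarrow H^1(\Omega)$; similarly, the symbol of $A^{1/2}$ is equivalent to $|k_3|+|k_H|^2$, so $D(A^{1/4})\hookrightarrow H_z^{1/2}L^2_{xy}\cap L^2_z H^1_{xy}$. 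Applying the mixed derivative theorem \eqref{eq:mdt:xyz} with $r=1/2$, $s=1$, and $\theta=1/2$ produces $D(A^{1/4})\hookrightarrow H_z^{1/4}H_{xy}^{1/2}$, and the one-dimensional embedding $H^{1/4}_z\hookrightarrow L^4_z$ together with the two-dimensional embedding $H^{1/2}_{xy}\hookrightarrow L^4_{xy}$ then gives $D(A^{1/4})\hookrightarrow L^4(\Omega)$. Consequently,
\[
\abs{\langle u\otimes u,\nabla\varphi\rangle} \leq \norm{u}^2_{L^4}\norm{\nabla\varphi}_{L^2} \leq C\norm{u}^2_{D(A^{1/4})}\norm{\varphi}_{D(A^{1/2})}.
\]

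The main obstacle, in both cases, is the \emph{sharpness} of the chain of embeddings: in case $(i)$ the Sobolev indices $5/8 \to 24/7$ and $1/4\to 12/5$ are critical in three dimensions, and in case $(ii)$ the interpolation parameter $\theta=1/2$ is the unique value that places $u\otimes u$ into $L^2(\Omega)$ using the separate one- and two-dimensional Sobolev thresholds. The values $5/4$ (respectively the horizontal order $2$) are dictated precisely by these critical embeddings and are the reason these powers of hyper-viscosity appear in Theorem~\ref{thm:hns}.
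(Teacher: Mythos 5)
Your proof is correct and follows essentially the same route as the paper: the same critical Sobolev exponents ($H^{5/8}\hookrightarrow L^{24/7}$ and the dual pair $L^{12/7}\hookrightarrow H^{-1/4}$ versus $H^{1/4}\hookrightarrow L^{12/5}$ in case $(i)$), and the same mixed-derivative-theorem embedding $D(A^{1/4})\hookrightarrow H_z^{1/4}H_{xy}^{1/2}\hookrightarrow L^4$ in case $(ii)$. The only difference is that you phrase everything through the duality pairing against $\nabla\varphi$, whereas the paper estimates the negative-order norms directly (and itself switches to the dual formulation for the $H^{-1}$ bound in case $(ii)$), so this is a presentational rather than a substantive distinction.
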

\begin{proof}
	In the situation of Theorem~\ref{thm:hns} $(i)$ one has $D(A^{-1/2})=H_{per,\sigma}^{-5/4}(\Omega)$ and therefore
	\begin{align*}
	\norm{\PP_{\sigma} \div (u\otimes u)}_{D(A^{-1/2})}&=	\norm{\PP_{\sigma}\div (u\otimes u)}_{H_{per}^{-5/4}} \\
	&\leq	C\norm{(u\otimes u)}_{H^{-1/4}_{per}}\\
	&\leq	C\norm{u\otimes u}_{L^{12/7}} \leq C\norm{u}^2_{L^{24/7}} \\
	&\leq
	C\norm{u}^2_{H^{5/8}} =C\norm{u}^2_{D(A^{1/4})}.
	\end{align*}
	Here, the boundedness of $\PP_{\sigma}$ in $H_{per}^s(\Omega)$,  the mapping properties of $\div \colon H^{s+1}_{per}(\Omega)^{3 }\rightarrow H^{s}_{per}(\Omega)$, $s\in \R$, the embedding $L^{12/7}(\Omega)\hookrightarrow H^{-1/4}(\Omega)$, 
	H\"older's inequality, the Sobolev embedding $H^{5/8}(\Omega)\hookrightarrow L^{24/7}(\Omega)$  and $D(A^{1/4})=H^{5/8}_{per,\sigma}(\Omega)\subset H^{5/8}(\Omega)^3$ are used.

	In the situation of Theorem~\ref{thm:hns} $(b)$ one has 
	\begin{align*}
	\norm{\PP_{\sigma} \div (u\otimes u)}_{D(A^{-1/2})}& C\leq	\norm{\PP_{\sigma} \div (u\otimes u)}_{H_{per}^{-1}} \\
	&\leq	C\norm{u\otimes u}_{L^2}\\
	&\leq	C\norm{u}^2_{L_z^{4} L_{xy}^{4}} \leq  \norm{u}^2_{H_z^{1/4} H_{xy}^{1/2}} \leq \norm{u}^2_{D(A^{1/4})}.
	\end{align*}
	Here one uses $H^{-1}_{per}(\Omega)^3\hookrightarrow D(A^{-1/2})$ which follows by duality form $D(A^{1/2})\hookrightarrow H_{per}^1(\Omega)^3$. 
	Furthermore, 
	\begin{align*}
	\norm{\PP_{\sigma} \div (u\otimes u)}_{H_{per}^{-1}}= \sup_{0\neq \varphi\in H^1(\Omega)} \frac{|\int_{\Omega}\PP_{\sigma} \div (u\otimes u)\cdot \varphi|}{\norm{\varphi}_{H_{per}^1}} \leq \norm{u\otimes u}_{L^2} \frac{\norm{\nabla \PP_{\sigma}\varphi}_{L^2}}{\norm{\varphi}_{H_{per}^1}} \leq \norm{u\otimes u}_{L^2},
	\end{align*}
	where one uses that in the periodic setting $\norm{\nabla\PP_{\sigma}\varphi}_{L^2}\leq\norm{\nabla\varphi}_{L^2}$, and an anisotropic H\"older inequality is applied as well as the embeddings $H_z^{1/4}\hookrightarrow L^4_z$ and $H_{xy}^{1/2}\hookrightarrow L^4_{xy}$. To prove in the last inequality that $D(A^{1/4}) \hookrightarrow H_z^{1/4} H_{xy}^{1/2}$ holds, one can apply the mixed derivative theorem, cf. \cite[Corollary III.4.5.10]{PruessSimonett2016}, to $-\Delta_H$ with $D(-\Delta_H)=L^2_zH_{xy}^{2}$ and $(-\partial_z^2)^{1/2}$ with $D((-\partial_z^2)^{1/2})=H^1_zL_{xy}^2$ for $\theta=1/2$, cf. \eqref{eq:mdt:xyz},	
	 to obtain  $$ D(A^{1/4})\hookrightarrow L^2_zH_{xy}^{1}\cap H_{z}^{1/2}L^2_{xy}\hookrightarrow  H_z^{1/4} H_{xy}^{1/2}.$$  Alternatively one can deduce using 
	the Fourier series representation that
	\begin{align*}
	D(A^{1/4})=\{u\in L^2_{\sigma}(\Omega) \colon \sum_{k\in \Z^3} (|k_H|^4 + |k_z|^2)^{1/2} |\hat{u}(k)|^2 <\infty    \}.
	\end{align*}
	Hence assuming without loss of generality that $\int_{\Omega}u=0$ one obtains by Parsevals's and Young's inequality,
	\begin{align*}
	\norm{u}^2_{H_z^{1/4} H_{xy}^{1/2}} &\leq C\sum_{k\in \Z^3} |k_H|\cdot |k_z|^{1/2} |\hat{u}(k)|^2 \\
	&\leq C\sum_{k\in \Z^3} (|k_H|^2 + |k_z|) |\hat{u}(k)|^2 \\
	&\leq C\sum_{k\in \Z^3} (|k_H|^4 + |k_z|^2)^{1/2} |\hat{u}(k)|^2 
	=C \norm{u}^2_{D(A^{1/4})}.\qedhere
	\end{align*} 
\end{proof}

\begin{lemma}[Estimates on the non-linearity for the hyper-viscous primitive equations] \label{lem:nonlin_PE} Let $A$ be as in Theorem~\ref{thm:hpe} $(i)$, then
		\begin{align*}
		\norm{\PP_{\bar{\sigma}} \div (u(v)\otimes v')}_{D(A^{-1/2})} \leq \norm{v}_{D(A^{13/32})}\norm{v'}_{D({A^{3/32}})}, \quad v,v' \in D(A^{1/2}).
		\end{align*}
		If $A$ is as in Theorem~\ref{thm:hpe} $(ii)$, then 
		\begin{align*}
		\norm{\PP \div (u(v)\otimes v')}_{D(A^{-1/2})} \leq \norm{v}_{D(A^{3/8})}\norm{v'}_{D({A^{1/8}})}, \quad v,v' \in D(A^{1/2}).
		\end{align*}
\end{lemma}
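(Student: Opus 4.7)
The plan mirrors Lemma~\ref{lem:nonlin_NS}: reduce to a product estimate via the divergence structure, split $u(v) = (v, w(v))$ with $w(v) = -\int_{-1}^{z}\div_H v\, d\zeta$, and handle each piece separately. The $w(v)$-piece is the novel ingredient; for it I would use Minkowski's inequality in $z$ together with the boundary vanishing $w(v)|_{z=\pm 1} = 0$ to gain vertical $L^{\infty}$-control and trade it against the loss of one horizontal derivative inherent in $\div_H v$.

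For case $(i)$, where $D(A^{-1/2})=H^{-8/5}_{per,\bar\sigma}$, $D(A^{13/32})=H^{13/10}$ and $D(A^{3/32})=H^{3/10}$, I would first use boundedness of $\PP_{\bar\sigma}$ on $H^{s}_{per}$ and $\div\colon H^{-3/5}\to H^{-8/5}$ to reduce to bounding $\norm{u(v)\otimes v'}_{H^{-3/5}}$. Dualising against $\varphi\in H^{3/5}$ and using $H^{3/5}\hookrightarrow L^{10/3}$, the $v$-component is handled exactly as in the Navier--Stokes case by a triple H\"older with the isotropic Sobolev embeddings $H^{13/10}\hookrightarrow L^{10/3}$ and $H^{3/10}\hookrightarrow L^{5/2}$. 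For the $w(v)$-component, Minkowski yields $\norm{w(v)}_{L^\infty_z H^{3/10}_{xy}}\le C\norm{v}_{L^2_z H^{13/10}_{xy}}\le C\norm{v}_{H^{13/10}(\Omega)}$, which together with $H^{3/10}_{xy}\hookrightarrow L^{20/7}_{xy}$ places $w(v)$ in $L^\infty_z L^{20/7}_{xy}$; the estimate is then closed by an anisotropic triple H\"older in which the test function $\varphi$ absorbs the missing horizontal integrability via $H^{3/5}(\Omega)\hookrightarrow L^{2}_z H^{3/5}_{xy}\hookrightarrow L^{2}_z L^{5}_{xy}$.

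For case $(ii)$, the Fourier characterisation of $D(A^s)$ gives $D(A^{1/2})\hookrightarrow H^{1}_{per,\bar\sigma}(\Omega)^2$, so $H^{-1}_{per,\bar\sigma}\hookrightarrow D(A^{-1/2})$ by duality, and it suffices to prove $\norm{u(v)\otimes v'}_{L^{2}}\le C\norm{v}_{D(A^{3/8})}\norm{v'}_{D(A^{1/8})}$. The mixed derivative theorem~\eqref{eq:mdt:xyz} identifies $D(A^{3/8})\hookrightarrow H^{3/4}_z L^{2}_{xy}\cap L^{2}_z H^{3/2}_{xy}$ and $D(A^{1/8})\hookrightarrow H^{1/4}_z L^{2}_{xy}\cap L^{2}_z H^{1/2}_{xy}$; choosing $\theta=1/2$ yields $D(A^{3/8})\hookrightarrow H^{3/8}_z H^{3/4}_{xy}\hookrightarrow L^{8}_z L^{8}_{xy}$ and analogously $D(A^{1/8})\hookrightarrow L^{8/3}_z L^{8/3}_{xy}$, so the $v$-component is closed by isotropic H\"older $L^{8}\cdot L^{8/3}\hookrightarrow L^{2}$. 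For the $w(v)$-component, Minkowski combined with $L^2_z H^{1/2}_{xy}\hookrightarrow L^2_z L^{4}_{xy}$ gives $\norm{w(v)}_{L^{\infty}_z L^{4}_{xy}}\le C\norm{v}_{D(A^{3/8})}$, which pairs with $v'\in L^2_z L^{4}_{xy}$ coming from $D(A^{1/8})\hookrightarrow L^{2}_z H^{1/2}_{xy}$.

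The principal obstacle is the $w(v)$ term, which is what forces the asymmetric splittings $13/32$ versus $3/32$ in case $(i)$ and $3/8$ versus $1/8$ in case $(ii)$, rather than the symmetric $1/4$--$1/4$ split of the Navier--Stokes case. A direct isotropic H\"older estimate on $w(v)$ fails because the horizontal Sobolev embedding would require more horizontal regularity than $v$ alone provides; the only way out is the anisotropic route, trading Minkowski's vertical gain for the derivative lost in $\div_H v$, and then rebalancing the remaining factors with appropriately tuned anisotropic H\"older pairings. Once the correct anisotropic spaces are in place, all remaining manipulations are routine Sobolev embeddings and applications of the mixed derivative theorem.
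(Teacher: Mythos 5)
Your proposal is correct and follows essentially the same route as the paper's proof: the same reduction via boundedness of $\PP_{\bar\sigma}$ and $\div$ to a product norm ($L^{10/7}\hookrightarrow H^{-3/5}$ in case $(i)$, $L^2\hookrightarrow H^{-1}$ in case $(ii)$), the same splitting into the $v\otimes v'$ and $w(v)v'$ pieces, the same $L^\infty_z$-control of $w(v)$ traded against the horizontal derivative in $\div_H v$, and the same identification of $D(A^s)$ via the mixed derivative theorem in case $(ii)$. The only deviations are cosmetic: you invoke Minkowski's integral inequality where the paper uses Poincar\'e together with $H^1_z\hookrightarrow L^\infty_z$, and you use an isotropic H\"older/Sobolev pairing for the $v\otimes v'$ piece in case $(i)$ where the paper uses an anisotropic one — both close the estimate.
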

\begin{proof}
	In the situation of Theorem~\ref{thm:hpe} $(i)$ one has $D(A^{-1/2})=H_{\bar\sigma,per}^{-8/5}(\Omega)$. 
	Then
	\begin{align*}
	\norm{\PP_{\bar\sigma} \div (u(v)\otimes v)}_{D(A^{-1/2})}&=	\norm{\PP_{\bar\sigma} \div (u(v)\otimes v)}_{H_{per}^{-8/5}} 
	\leq	C\norm{u(v)\otimes v}_{H_{per}^{-3/5}}\\
	&\leq	C\norm{u(v)\otimes v}_{L^{10/7}} 
	\leq  C \norm{w(v)\cdot v}_{L^{10/7}} + C\norm{v\otimes v}_{L^{10/7}} \\
	&\leq C \norm{w(v)}_{L_z^{\infty}L_{xy}^{20/7}} \norm{v}_{L_z^{10/7}L_{xy}^{20/7}} + C \norm{v}_{L_z^{2}L_{xy}^{5}}\norm{v}_{L_z^{5}L_{xy}^{2}}\\
	&\leq C \norm{v}_{L_z^{2}H_{xy}^{13/10}} \norm{v}_{L_z^{2}H_{xy}^{3/10}} + C \norm{v}_{L_z^{2}H_{xy}^{3/5}}\norm{v}_{H_z^{3/10}L_{xy}^{2}}\\
	&\leq C \norm{v}_{D(A^{13/32})} \norm{v}_{D(A^{3/32})}.
	\end{align*}
	Here, one has used boundedness of $\PP_{\bar\sigma}$ in $H^s(\Omega)$, that $\div \colon H^{s+1}(\Omega)^3\rightarrow H^{s}(\Omega)$, $s\in \R$, and the embedding $L^{10/7}(\Omega)\hookrightarrow  H^{-3/5}(\Omega)$. Furthermore, anisotropic H\"older estimates are applied, where it is used that $$\norm{w(v)}_{L_z^{\infty}L_{xy}^{20/7}}\leq C \norm{\partial_z w(v)}_{L_z^{2}L_{xy}^{20/7}}=C \norm{\div_H v}_{L_z^{2}L_{xy}^{20/7}},$$ 
	where the embedding $H^1_z\hookrightarrow L^{\infty}_z$ and Poincar\'{e}'s inequality for $w(v)$ have been applied. Moreover, the embeddings $H_{xy}^{3/10}\hookrightarrow L_{xy}^{20/7}$, $H_{xy}^{3/5}\hookrightarrow L_{xy}^{5}$ and $H_z^{3/10}\hookrightarrow L_z^{5}$ are applied. Eventually,  $D(A^{13/32}) \hookrightarrow H^{13/10}(\Omega)\hookrightarrow  L_z^{2}H_{xy}^{13/10}$ and $D(A^{3/32})\hookrightarrow  H^{3/10}(\Omega)\hookrightarrow H_z^{3/10}L_{xy}^{2}$.

	In the situation of Theorem~\ref{thm:hpe} $(ii)$ one has 
	\begin{align*}
	\norm{\PP_{\bar\sigma} \div (u(v)\otimes v)}_{D(A^{-1/2})} &\leq 
	\norm{\PP_{\bar\sigma} \div (u(v)\otimes v)}_{H_{per}^{-1}} \\
	& \leq	\norm{u(v)\otimes v}_{L^2} \\
	&\leq	C\norm{v}_{L^2_z H_{xy}^{3/2}} \norm{v}_{L^2_z H_{xy}^{1/2}} + 	C\norm{v}_{H^{1/2}_z H_{xy}^{1/2}} \norm{v}_{L^2_z H_{xy}^{1/2}} \\
	&\leq C \norm{v}_{D(A^{3/8})} \norm{v}_{D(A^{1/8})}, 	
	\end{align*}
	where similar arguments as in Lemma~\ref{lem:nonlin_NS} have been applied together with the estimates 
	\begin{align*}
	\norm{w(v)v}_{L^2}
	\leq C \norm{w(v)}_{L^{\infty}_zL^4_{xy}}\norm{v}_{L^{2}_zL^4_{xy}}
	\leq C \norm{\nabla_H v}_{L^{2}_zH^{1/2}_{xy}}\norm{v}_{L^{2}_zH^{1/2}_{xy}}
	\leq C\norm{v}_{H^{1/8}_z H_{xy}^{1/4}} \norm{v}_{H^{3/8}_z H_{xy}^{3/4}}
	\end{align*}
	and
	\begin{align*}
	\norm{v\otimes v}_{L^2}
	\leq C \norm{v}_{L^{3/8}_zL^{3/8}_{xy}}\norm{v}_{L^{8}_zL^8_{xy}}
	\leq C \norm{v}_{H^{1/8}_zH^{1/4}_{xy}}\norm{v}_{H^{3/8}_zH^{3/4}_{xy}}
	\leq C \norm{v}_{D(A^{3/8})} \norm{v}_{D(A^{1/8})},
	\end{align*}
	where Sobolev embeddings and the mixed derivative theorem are applied to obtain
\begin{align*}
D(A^{1/8})\hookrightarrow H_z^{1/4}L^2_{xy}\cap L_z^{2}H^{1/2}_{xy} \hookrightarrow H^{1/8}_zH^{1/4}_{xy} \quad \hbox{and} \quad D(A^{3/8})\hookrightarrow H_z^{3/4}L^2_{xy}\cap L_z^{2}H^{3/2}_{xy} \hookrightarrow H^{3/8}_zH^{3/4}_{xy},
\end{align*}
which, as in the proof of Lemma~\ref{lem:nonlin_NS}, can be proven alternatively by using the Fourier series representation. 
\end{proof}

In the following denote by $a(k)$, $k\in \Z^3$,  the (elliptic) symbol of $A$, where $a(k)>0$ for $k\neq 0$, that is
\begin{align*}
a(k)= \nu|k|^2 +\varepsilon |k|^{5/4}, \quad a(k)= \nu|k|^2 +\varepsilon |k|^{8/5} \quad \hbox{or}\quad a(k)= \nu|k|^2 +\varepsilon |k_H|^{4}, \quad k_H=(k_1,k_2).
\end{align*}

\begin{proposition}[Global strong well-posedness for the hyper-viscous Navier-Stokes equations] \label{prop:loc_NS}
	Let the operator $A$ be as in Theorem~\ref{thm:hns} $(i)$ or $(ii)$. Then for each
		\begin{align*}
		u_0\in L^2_{\sigma}(\Omega) \quad \hbox{and} \quad f\in L^2(0,T;D(A^{-1/2})), 
		\end{align*}
		there is a unique solution $u\in \IE_1(T)$ to \ref{eq:NS} and it satisfies the energy equality
\begin{align}\label{eq:NS_energy_eq}
\norm{u(t)}_{L^2(\Omega)}^2 + 2 \int_0^t \norm{A^{1/2}u(s)}_{L^2(\Omega)}^2 ds =    \norm{u(0)}_{L^2(\Omega)}^2 + 2 \int_0^t f(s)\cdot u(s) ds, \quad 0<t\leq T.
\end{align}
\end{proposition}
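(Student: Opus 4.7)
The plan is to deduce local existence from Proposition~\ref{prop:loc_ex}, derive the energy equality on the local interval, and then continue the solution globally via the resulting a priori bound. I would work with the ground space $X_0 = D(A^{-1/2})$ and domain $X_1 = D(A^{1/2})$: by Lemma~\ref{lem:max_reg_Lapalce} combined with Remark~\ref{rem:Hilbert}, $A$ is a non-negative self-adjoint operator and therefore has maximal $L^2$-regularity on $X_0$, with $X_\theta = D(A^{-1/2+\theta})$; in particular the trace space is $X_{1/2} = L^2_\sigma(\Omega)$ and $X_{3/4} = D(A^{1/4})$. Lemma~\ref{lem:nonlin_NS} then provides exactly the bi-linear bound required by Proposition~\ref{prop:loc_ex} with $\beta_1 = \beta_2 = 3/4 \in (1/2,1)$ and $\beta_1 + \beta_2 = 3/2$, yielding a unique strong solution $u \in \IE_1(T^*)$ on some maximal interval $[0, T_{\max})$ with $T_{\max} \in (0, T]$.

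The energy equality would be obtained from the Hilbert-space identity
\[
\tfrac{1}{2}\tfrac{d}{dt}\|u(t)\|^2_{L^2} = \langle \partial_t u(t), u(t)\rangle_{D(A^{-1/2}), D(A^{1/2})}
\]
valid in $\cD'(0, T_{\max})$ thanks to the embedding $\IE_1(T^*) \hookrightarrow C^0([0, T^*]; L^2_\sigma(\Omega))$. Substituting the equation, self-adjointness of $A$ produces $-\|A^{1/2}u\|^2_{L^2}$, while the remaining trilinear term $\langle \PP_\sigma(u\cdot\nabla u), u\rangle$ vanishes by the $\div u = 0$ cancellation. I expect the main technical obstacle to be the rigorous justification of this cancellation at the minimal regularity $u \in D(A^{1/2})$, which I would handle by approximating $u$ in $D(A^{1/2})$ by smooth solenoidal periodic fields and passing to the limit using the continuity of the trilinear form $(u,u',u'') \mapsto \langle \PP_\sigma \div(u\otimes u'), u''\rangle$ on $D(A^{1/4})^3$ that is implicit in Lemma~\ref{lem:nonlin_NS}. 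Absorbing $2\langle f, u\rangle$ via Young's inequality then delivers both \eqref{eq:NS_energy_eq} and the a priori bound
\[
\|u\|^2_{L^\infty(0, T_{\max}; L^2)} + \|u\|^2_{L^2(0, T_{\max}; D(A^{1/2}))} \leq \|u_0\|^2_{L^2} + \|f\|^2_{L^2(0, T; D(A^{-1/2}))},
\]
which is independent of $T_{\max}$.

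For globality I would argue by contradiction. Using $\|Au\|_{X_0} = \|u\|_{X_1}$ together with the mixed-derivative embedding $\IE_1(T_{\max}) \hookrightarrow L^4(0, T_{\max}; D(A^{1/4}))$ and Lemma~\ref{lem:nonlin_NS}, the equation itself yields $\partial_t u \in L^2(0, T_{\max}; X_0)$, so that $u \in \IE_1(0, T_{\max})$ and in particular the limit $u(T_{\max}^-) \in L^2_\sigma(\Omega)$ is well-defined through the trace embedding. Restarting Proposition~\ref{prop:loc_ex} from this datum on $[T_{\max}, T]$, with the restricted forcing $f|_{[T_{\max}, T]} \in L^2(T_{\max}, T; X_0)$, would then produce a strict extension of $u$ unless $T_{\max} = T$, contradicting maximality. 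Uniqueness on the full interval $[0, T]$ follows by propagating the local uniqueness already supplied by Proposition~\ref{prop:loc_ex}.
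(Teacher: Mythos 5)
Your local-existence step and the derivation of the energy equality follow the paper's route exactly: $X_0=D(A^{-1/2})$, $X_1=D(A^{1/2})$, self-adjointness from Lemma~\ref{lem:max_reg_Lapalce} plus Remark~\ref{rem:Hilbert} for maximal $L^2$-regularity, and Lemma~\ref{lem:nonlin_NS} feeding Proposition~\ref{prop:loc_ex} with $\beta_1=\beta_2=3/4$; testing with the solution itself then gives \eqref{eq:NS_energy_eq} on any interval of existence. The globalization, however, is where you diverge from the paper and where your argument has a circular step. You invoke the embedding $\IE_1(T_{\max})\hookrightarrow L^4(0,T_{\max};D(A^{1/4}))$ to bound $\cF(u,u)$ in $L^2(0,T_{\max};X_0)$ and thereby conclude $u\in\IE_1(T_{\max})$ -- but that embedding presupposes the very membership $u\in\IE_1(T_{\max})$ you are trying to establish; a priori the maximal solution lies in $\IE_1(T^*)$ only for each $T^*<T_{\max}$, and its $\IE_1$-norm could blow up as $T^*\to T_{\max}$. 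The repair is available from data you already have: the energy bound gives $u\in L^\infty(0,T_{\max};L^2)\cap L^2(0,T_{\max};D(A^{1/2}))$ uniformly, and the interpolation inequality $\norm{u}^2_{D(A^{1/4})}\leq\norm{u}_{L^2}\norm{u}_{D(A^{1/2})}$ yields $u\in L^4(0,T_{\max};D(A^{1/4}))$ directly, without passing through $\IE_1$. Then Lemma~\ref{lem:nonlin_NS} gives $\cF(u,u)\in L^2(0,T_{\max};X_0)$, and maximal regularity of the \emph{linear} problem with right-hand side $f+\cF(u,u)$ upgrades $u$ to $\IE_1(T_{\max})$, so the trace $u(T_{\max})\in X_{1/2}=L^2_\sigma(\Omega)$ exists and the restart goes through.

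It is worth noting that the paper avoids this delicate endpoint issue altogether by a different mechanism: instead of a maximal-interval contradiction, it extracts from Proposition~\ref{prop:loc_ex} an explicit smallness condition $\norm{\psi_0^*}^2_{\IE_1(T^*)}<B(r,C)$ guaranteeing existence on $(0,T^*)$, splits this into a forcing contribution (made small by absolute continuity of $\norm{f}^2_{\IE_0}$) and a semigroup contribution $\norm{e^{-tA}u(\tau)}^2_{\IE_1(I)}$, and bounds the latter via Parseval by $C\,|I|$ times the energy norm of the data. Since the energy estimate \eqref{eq:energyestimate} controls $\norm{u(\tau)}_{L^2}$ uniformly at every restart time $\tau$, this produces a uniform time step and covers $[0,T]$ in finitely many iterations. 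Your contradiction argument is a legitimate alternative once the circularity is patched as above, but you should either make the interpolation step explicit or adopt the paper's uniform-time-step scheme; as written the extension to $T_{\max}$ is not justified.
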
	
	\begin{proof}
		Using Lemma~\ref{lem:max_reg_Lapalce}, Remark~\ref{rem:Hilbert} and Lemma~\ref{lem:nonlin_NS} local existence on $(0,T^*)$ for some $T^*\in (0,T]$ follows by Proposition~\ref{prop:loc_ex}. Moreover, the solution is regular enough to test it with itself which gives the energy equality on $(0,T^*)$ for any existence time $T^*\in (0,T]$.
%

To prove now global existence, explicit control on the existence time is used.
Note that by maximal regularity there exists a constant $C>0$ such that 
\begin{align*}
\norm{u_0^*}^2_{\IE_1(T^*)}\leq  
C\norm{f}^2_{\IE_0(T^*)} +\norm{e^{-tA}u_0}^2_{\IE_1(T^*)} \quad \hbox{and} \quad  \norm{e^{-tA}u_0}^2_{\IE_1(T^*)}= \norm{(A+1)e^{-tA}u_0}^2_{\IE_0(T^*)}.
\end{align*}
By Proposition~\ref{prop:loc_ex} for $r=1/4C$ one has for $\norm{u_0^*}^2_{\IE_1(T^*)}< B(r,C)$
that $u\in \mathds{B}_{r,T^*,u_0}$. So first, note that since $\norm{f}^2_{\IE_0(T)}<\infty$, for any $\delta>0$, there is a $\tau>0$ and $m\in \N$ such that $T=\tau m$, and with $I_j=((j-1)\tau,j\tau)$, $1\leq j\leq m$, such that $\norm{f}^2_{\IE_0(I_j)}<\delta$. Hence for $\delta=(1/2C)B(r,C)$, one has a partition into finitely many intervals $I_j$ where 
$\norm{u_0^*}^2_{\IE_1(I_j)}\leq \frac{1}{2} B(r,C) + \norm{e^{-tA}u_0}^2_{\IE_1(I_j)}$. 

Now, the aim is to find for each $I_j$ a partition into finitely many subintervals $I$, where $\norm{e^{-tA}u_0}^2_{\IE_1(I)}<\frac{1}{2} B(r,C)$. To this end, compute for an interval $I=[\tau,\sigma]\subset I_j$ using Parsevals's equality
	\begin{align*}
	\norm{(A+1)e^{-tA}u(\tau)}^2_{\IE_0(I)} &= \int_{\tau}^{\sigma}\sum_{k\in \Z^3} \frac{(a(k)+1)^2}{(a(k)+1)} e^{-2ta(k)}|\hat{u}(\tau,k)|^2 dt \\
	&= \sum_{k\in \Z^3\setminus\{0\}} \frac{(a(k)+1)}{-2a(k)} (e^{-2a(k)\sigma}-e^{-2a(k)\tau}) |\hat{u}(\tau,k)|^2 dt  + \int_{I}|\hat{u}(\tau,0)|^2 dt  \\
	&= \sum_{k\in \Z^3\setminus\{0\}} \frac{(a(k)+1)}{2a(k)} (1-e^{-2a(k)|I|})e^{-2a(k)\tau} |\hat{u}(\tau,k)|^2 dt  + \int_{I}|\hat{u}(\tau,0)|^2 dt  \\
	&\leq C \max\{1, 2C_a |k|_{\min}\}|I|\cdot \left(\norm{u(0)}^2_{L^2(\Omega)} + \norm{f}^2_{L^2(0,T;D(A^{-1/2}))}\right)
	\end{align*}
	where $a(k)$ denotes the symbol of $A$,
\begin{align*}
|k|_{\min}:=\min\{|a(k)|\colon k\in \Z^3\setminus\{0\} \} \quad\hbox{and}\quad C_a=\max_{k\in \Z^3\setminus\{0\}}\left\{\frac{(a(k)+1)}{2a(k)}e^{-2a(k)T}\right\},
\end{align*}	
and one uses that $|1-e^{-2a(k)t}|\leq 2tk_{\min}$ for $2tk_{\min}\leq 1$. In the last inequality \eqref{eq:energyestimate} has been used to estimate $\norm{u(\tau)}^2_{L^2}$ and  $\int_{I}|\hat{u}(\tau,0)|^2 dt\leq |I| \norm{u}^2_{L^{\infty}(0,T;L^2)}$. 
Hence, with $C_1=C\max\{1, 2C_a |k|_{\min}\}$ and
\begin{align*}
|I| = \min\left\{
\frac{1}{4C_1 \left(\norm{u(0)}^2_{L^2(\Omega)} + \norm{f}^2_{L^2(0,T;D(A^{-1/2}))}\right)} B(r,C), 1/2k_{\min}
\right\}\hbox{ one has } \norm{e^{-tA}u_0}^2_{\IE_1(I)} \leq \frac{1}{4}B(r,C),
\end{align*}
and since this is a uniform time step global existence follows by induction in finitely many steps. 
\end{proof}

The proof for the case of the hyper-viscous primitive equations is analogous to the above using Lemma~\ref{lem:nonlin_PE} instead of Lemma~\ref{lem:nonlin_NS}. 
\begin{proposition}[Global strong well-posedness for the hyper-viscous primitive equations]\label{prop:loc_PE} 
	Let the operator $A$ be as in Theorem~\ref{thm:hpe} $(i)$ or $(ii)$. Then for each
	\begin{align*}
	v_0\in L^2_{\overline{\sigma}}(\Omega) \quad \hbox{and} \quad f\in L^2(0,T;D(A^{-1/2})), 
	\end{align*}
	there is a unique solution $v\in \IE_1(T)$ to \ref{eq:PE}, and 
it satisfies the energy equality
	\begin{align}\label{eq:PE_energy_eq}
	\norm{v(t)}_{L^2(\Omega)}^2 + 2 \int_0^t \norm{A^{1/2}v(s)}_{L^2(\Omega)}^2 ds =    \norm{v(0)}_{L^2(\Omega)}^2 + 2 \int_0^t f(s)\cdot v(s) ds, \quad 0<t\leq T.
	\end{align}
\end{proposition}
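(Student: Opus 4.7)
The plan is to follow the strategy of Proposition~\ref{prop:loc_NS} verbatim, swapping in Lemma~\ref{lem:nonlin_PE} for Lemma~\ref{lem:nonlin_NS} and checking that the exponents produced by Lemma~\ref{lem:nonlin_PE} still fit the abstract framework of Proposition~\ref{prop:loc_ex}. First, I would fix $X_0=D(A^{-1/2})$ and $X_1=D(A^{1/2})$. By Lemma~\ref{lem:max_reg_Lapalce} the operator $A$ is non-negative self-adjoint on $X_0$ with domain $X_1$, hence by Remark~\ref{rem:Hilbert}~(a) it has maximal $L^2$-regularity on $(0,T)$; moreover, by Remark~\ref{rem:Hilbert}~(b) the interpolation scale is $X_\theta=D(A^{-1/2+\theta})$ for $\theta\in(0,1)$, so that the trace space is $X_{1/2}=L^2_{\bar\sigma}(\Omega)$, matching the initial-value space of the proposition.

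Next I would match the exponents of Lemma~\ref{lem:nonlin_PE} to those of Proposition~\ref{prop:loc_ex}. Writing $\norm{\cF_{PE}(v,v')}_{X_0}\leq C\norm{v}_{X_{\beta_1}}\norm{v'}_{X_{\beta_2}}$ via $\beta=1/2+\theta$, the exponents $(13/32,3/32)$ of Lemma~\ref{lem:nonlin_PE} in case $(i)$ translate to $(\beta_1,\beta_2)=(29/32,19/32)\in (1/2,1)^2$ with sum $3/2$, while in case $(ii)$ the exponents $(3/8,1/8)$ translate to $(\beta_1,\beta_2)=(7/8,5/8)$, again satisfying the hypotheses. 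Hence Proposition~\ref{prop:loc_ex} produces a unique local strong solution $v\in \IE_1(T^*)$ for some $T^*\in(0,T]$. Since $v\in L^2(0,T^*;D(A^{1/2}))\cap H^1(0,T^*;D(A^{-1/2}))\hookrightarrow C^0([0,T^*];L^2_{\bar\sigma}(\Omega))$, one may test the equation against $v$ itself in the $D(A^{-1/2})/D(A^{1/2})$-duality; the trilinear term $\langle \cF_{PE}(v,v),v\rangle$ vanishes after integration by parts using $\partial_z w(v)=-\div_H v$ together with the periodicity/parity conditions, and this delivers \eqref{eq:PE_energy_eq} on $(0,T^*)$.

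For globality I would reproduce the uniform time-step argument of Proposition~\ref{prop:loc_NS} without modification. The key point is that the local existence time furnished by Proposition~\ref{prop:loc_ex} depends on $v(\tau)$ only through $\norm{v_\tau^*}^2_{\IE_1(I)}$, which can be made uniformly small on short time-slabs $I$: on the one hand $\norm{f}^2_{\IE_0(I)}\to 0$ as $|I|\to 0$ by absolute continuity, and on the other hand the Fourier/Parseval computation in Proposition~\ref{prop:loc_NS} yields $\norm{(A+1)e^{-tA}v(\tau)}^2_{\IE_0(I)}\leq C|I|(\norm{v(0)}^2_{L^2}+\norm{f}^2_{L^2(0,T;D(A^{-1/2}))})$, where the a priori bound \eqref{eq:energyestimate} is used to dominate $\norm{v(\tau)}^2_{L^2}$ independently of $\tau\in[0,T]$. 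A finite partition of $[0,T]$ into such slabs then extends the solution to the whole interval, and the energy equality extends by concatenation.

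The main obstacle is entirely packaged inside Lemma~\ref{lem:nonlin_PE}: the primitive-equations non-linearity is genuinely stronger than the Navier-Stokes one because $w(v)$ involves a vertical integral of horizontal derivatives of $v$, and this is precisely what forces the higher power $(-\Delta)^{8/5}$ in case $(i)$ (rather than $(-\Delta)^{5/4}$) to still achieve $\beta_1+\beta_2=3/2$ with $\beta_1,\beta_2\in(1/2,1)$. Once that bilinear estimate is in hand, the abstract maximal-$L^2$-regularity scheme of Proposition~\ref{prop:loc_ex} and the Fourier-theoretic extension argument of Proposition~\ref{prop:loc_NS} yield the result with no further genuinely new input.
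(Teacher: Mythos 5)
Your proposal is correct and coincides with the paper's proof, which consists of the single remark that the argument of Proposition~\ref{prop:loc_NS} carries over verbatim once Lemma~\ref{lem:nonlin_NS} is replaced by Lemma~\ref{lem:nonlin_PE}; your translation of the exponents $(13/32,3/32)$ and $(3/8,1/8)$ into $(\beta_1,\beta_2)=(29/32,19/32)$ and $(7/8,5/8)$ with $\beta_1+\beta_2=3/2$ is exactly the check the paper leaves implicit. No gaps.
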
	

\subsection{Weak-strong uniqueness}
\begin{proof}[Proof of Theorem~\ref{thm:hns} and \ref{thm:hpe}(a)]
Let $u_s$ be the global strong solution given in Proposition~\ref{prop:loc_NS}, and $u_w$ be any weak solution to \eqref{eq:NS} both for data $u_0$ and $f$.
Using the regularity of strong solution $u_s$ and an approximation argument by smooth functions with respect to time, the strong solution can be inserted as test function in the definition of weak solutions $u_w$, i.e, one obtains for $t\in (0,T)$ multiplying it by two
\begin{align*}
	-2\int_0^t u_w \cdot \partial_t u_s + A^{1/2}u_w\cdot A^{1/2}u_s - (u_w\cdot \nabla u_w)\cdot u_s = 2u_w(t)\cdot u_s(t)- 2\norm{u(0)}_{L^2}^2 - 2\int_0^t f\cdot u_s.
\end{align*}
Adding amongst others this to the energy inequality~\ref{eq:ei_ns} and the energy equality from Proposition~\ref{prop:loc_NS} and using further  standard arguments one obtains for 
 $\delta=u_s-u_w$ with $\delta(0)=0$, cf. \cite[Proof of Theorem X.3.1]{Galdi}
\begin{align*}
\norm{\delta(t)}^2_{L^2} + 2\int_0^t \norm{A^{1/2}\delta(s)}^2_{L^2} ds \leq 	2\int_0^t (\delta\cdot \nabla \delta)\cdot u_s.
\end{align*}
It remains now to estimate the right-hand side. If $A$ is as in Theorem~\ref{thm:hns} (i) then one estimates
\begin{align*}
\int_0^t (\delta\cdot \nabla \delta)\cdot u_s 
&\leq \int_0^t \norm{\delta}_{L^2}
\norm{\delta}_{L^{12}}  \norm{\nabla u_s}_{L^{12/5}} \\
&\leq C\int_0^t\norm{\delta}_{L^{2}}\norm{\delta}_{H^{5/4}} \norm{\nabla u_s}_{H^{1/4}} \\
&\leq C \int_0^t\norm{\delta}^2_{L^{2}}\norm{u_s}_{D(A^{1/2})}^2 + \int_0^t\norm{\delta}_{D(A^{1/2})}^2 \\
&\leq C \int_0^t\norm{\delta}^2_{L^{2}}(\norm{u_s}_{D(A^{1/2})}^2+1/C) + \int_0^t\norm{A^{1/2}\delta}_{L^2}^2.
\end{align*}
Now, one can compensate the term $\int_0^t\norm{A^{1/2}\delta}_{L^2}^2$ into the left-hand side to obtain 
\begin{align*}
\norm{\delta(t)}^2_{L^2} + \int_0^t \norm{A^{1/2}\delta(s)}^2_{L^2} ds \leq 	C \int_0^t\norm{\delta}^2_{L^{2}}(\norm{u_s}_{D(A^{1/2})}^2+1/C), 
\end{align*}
and then the claim follows using the regularity of the strong solutions and Gr\"onwall's inequality.

If $A$ is as in Theorem~\ref{thm:hns} (ii) one estimates as in the proof of Lemma~\ref{lem:nonlin_PE} 
\begin{align*}
\int_0^t (\delta\cdot \nabla \delta)\cdot u_s 
&\leq C\int_0^t \norm{\delta}_{D(A^{1/4})}^2
 \norm{\nabla u_s}_{L^{2}} 
\leq \int_0^t \norm{\delta}_{D(L^2)} \norm{\delta}_{D(A^{1/2})}
 \norm{\nabla u_s}_{L^{2}} \\
&\leq C\int_0^t\norm{\delta}^2_{L^{2}}\norm{u_s}_{D(A^{1/2})}^2 + \int_0^t\norm{\delta}_{D(A^{1/2})}^2 
\end{align*}
using the interpolation inequality $\norm{\delta}_{D(A^{1/4})}^2\leq  \norm{\delta}_{L^2}\norm{\delta}_{D(A^{1/2})}$. The claim follows then as above.

Similarly for the primitive equations, let
$\delta=v_s-v_w$, where $v_w$ denotes any weak solution and $v_s$ the strong solution from Proposition~\ref{prop:loc_PE}, then one shows, analogously to \cite[Section 5]{GaldiHieberKashiwabara2017}, that
\begin{align}\label{eq:wekstrong_ns}
\norm{\delta(t)}^2_{L^2} + 2\int_0^t \norm{A^{1/2}\delta(s)}^2_{L^2} ds \leq 	\int_0^t (\delta\cdot \nabla \delta)\cdot v_s 
\end{align}
and estimates in the case of $A$ as in Theorem~\ref{thm:hpe} (i)
\begin{align*}
\int_0^t (\delta\cdot \nabla \delta)\cdot u_s 
&\leq \int_0^t \norm{\delta}_{L^2}
\norm{\delta}_{L^{5}}  \norm{\nabla u_s}_{L^{10/3}} 
\leq C\int_0^t\norm{\delta}_{L^{2}}\norm{\delta}_{H^{9/10}} \norm{\nabla u_s}_{H^{3/5}} \\
&\leq C\int_0^t\norm{\delta}^2_{L^{2}}\norm{u_s}_{D(A^{1/2})}^2 + \int_0^t\norm{\delta}_{D(A^{1/2})}^2. 
\end{align*}
If $A$ is as in Theorem~\ref{thm:hpe} (ii) as in the proof of Lemma~\ref{lem:nonlin_PE} 
\begin{align*}
\int_0^t (\delta\cdot \nabla \delta)\cdot u_s 
&\leq C\int_0^t \norm{\delta}_{D(A^{1/8})}\norm{\delta}_{D(A^{3/8})}
 \norm{\nabla u_s}_{L^{2}} 
\leq C\int_0^t \norm{\delta}_{D(L^2)} \norm{\delta}_{D(A^{1/2})}
 \norm{\nabla u_s}_{L^{2}} \\
&\leq C\int_0^t\norm{\delta}^2_{L^{2}}\norm{u_s}_{D(A^{1/2})}^2 + \int_0^t\norm{\delta}_{D(A^{1/2})}^2 
\end{align*}
using the interpolation inequalities $\norm{\delta}_{D(A^{1/8})}\leq  \norm{\delta}_{L^2}^{3/4}\norm{\delta}_{D(A^{1/2})}^{1/4}$ and $\norm{\delta}_{D(A^{3/8})}\leq  \norm{\delta}_{L^2}^{1/4}\norm{\delta}_{D(A^{1/2})}^{3/4}$.
The claim now follows as above by a compensation argument, the regularity of strong solutions and Gr\"onwall's inequality.
\end{proof}

\subsection{Convergence}
Note first that by the energy equality the sequences $(u_{\varepsilon_n})$ and $(v_{\varepsilon_n})$
of solutions to \eqref{eq:NS} and  \eqref{eq:PE}, respectively, are uniformly bounded in $C^0([0,T];L^2(\Omega))\cap L^2(0,T; H^1(\Omega))$ by \eqref{eq:energyestimate}. Hence, there are weakly convergent subsequences in $C^w(0,T;L^2(\Omega))\cap L^2(0,T; H^1(\Omega))$. Strong convergence is now proven via a compactness argument.
\begin{theorem}[Aubin-Lions Lemma, cf. Chapter III, Theorem 2.1 in \cite{Temam1977}]\label{thm:Temam}
	Let $X_0\subset X\subset X_1$ be Banach spaces such that
	\begin{itemize}
	\item[(i)] $X_0,X_1$ are reflexive,
	\item[(ii)] the injection $X_0\hookrightarrow X$ is compact.
	\end{itemize}
	Let $T\in (0,\infty)$ and $\alpha_0,\alpha_1>1$, then the embedding
	 \begin{align*}
	 \{v\in L^{\alpha_0}(0,T;X_0)\colon v_t\in L^{\alpha_1}(0,T;X_1) \}  \hookrightarrow L^{\alpha_0}(0,T;X)
	 \end{align*}
	 is compact.
\end{theorem}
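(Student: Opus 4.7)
The classical route is an Ehrling-type interpolation inequality combined with a vector-valued compactness argument. As a first step I would establish that compactness of $X_0\hookrightarrow X$ together with continuity of $X\hookrightarrow X_1$ implies the interpolation inequality
\begin{align*}
\|x\|_X \leq \eta \|x\|_{X_0} + C_\eta \|x\|_{X_1}, \qquad x\in X_0,
\end{align*}
for every $\eta>0$ and some $C_\eta>0$. This is proved by contradiction: if the inequality fails, one extracts a sequence with $\|x_n\|_X=1$, $\|x_n\|_{X_0}$ bounded, and $\|x_n\|_{X_1}\to 0$; by compactness of $X_0\hookrightarrow X$ a subsequence converges to some $x$ in $X$, while $X\hookrightarrow X_1$ forces $x=0$, contradicting $\|x_n\|_X=1$.

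\textbf{Reduction.} Let $(v_n)\subset W:=\{v\in L^{\alpha_0}(0,T;X_0)\colon v_t\in L^{\alpha_1}(0,T;X_1)\}$ be bounded. By reflexivity of $X_0, X_1$ and $\alpha_0,\alpha_1>1$, one extracts a subsequence with $v_n\rightharpoonup v$ in $L^{\alpha_0}(0,T;X_0)$ and $v_n'\rightharpoonup v'$ in $L^{\alpha_1}(0,T;X_1)$; replacing $v_n$ by $v_n-v$ one may assume $v=0$. Integrating the Ehrling inequality in time gives
\begin{align*}
\|v_n\|_{L^{\alpha_0}(0,T;X)} \leq \eta \|v_n\|_{L^{\alpha_0}(0,T;X_0)} + C_\eta \|v_n\|_{L^{\alpha_0}(0,T;X_1)}.
\end{align*}
The first factor is uniformly bounded, so as $\eta>0$ is arbitrary it suffices to show $v_n\to 0$ strongly in $L^{\alpha_0}(0,T;X_1)$.

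\textbf{Compactness in $L^{\alpha_0}(0,T;X_1)$.} Here I would invoke a vector-valued Fr\'echet--Kolmogorov criterion, or equivalently an Arzel\`a--Ascoli argument based on the embedding $W^{1,\alpha_1}(0,T;X_1)\hookrightarrow C([0,T];X_1)$. Equicontinuity in time is obtained via the fundamental theorem of calculus and H\"older's inequality,
\begin{align*}
\|v_n(t+h)-v_n(t)\|_{X_1} \leq \int_t^{t+h}\|v_n'(s)\|_{X_1}\,ds \leq h^{1-1/\alpha_1}\|v_n'\|_{L^{\alpha_1}(0,T;X_1)},
\end{align*}
uniformly in $n$. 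For pointwise relative compactness, one notes that $v_n(t)$ is bounded in $X_0$ for a.e. $t$, and the composition $X_0\hookrightarrow X\hookrightarrow X_1$ is compact. Together with the equicontinuity this yields a subsequence converging in $C([0,T];X_1)$, hence in $L^{\alpha_0}(0,T;X_1)$, completing the proof.

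\textbf{Main obstacle.} The delicate point is matching the a.e.\ pointwise-in-$t$ boundedness in $X_0$ (coming only from the integral bound $\|v_n\|_{L^{\alpha_0}(0,T;X_0)}\leq M$) with the continuous-in-$t$ equicontinuity in $X_1$, so that Arzel\`a--Ascoli actually applies on a set of full measure. One needs either to work with the continuous representatives of $v_n$ afforded by the embedding into $C([0,T];X_1)$ and then average $X_0$-norms over small intervals, or to carry out the Fr\'echet--Kolmogorov argument directly on mollified sequences. The condition $\alpha_0,\alpha_1>1$ enters precisely here, through H\"older's inequality and absolute continuity of representatives, and should be handled with care.
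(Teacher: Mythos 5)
This theorem is quoted in the paper from Temam (Chapter III, Theorem 2.1) without proof, and your sketch follows essentially the argument of that cited source: Ehrling's interpolation inequality reduces matters to strong convergence in $L^{\alpha_0}(0,T;X_1)$, which is then obtained from the equicontinuity supplied by $v_n'\in L^{\alpha_1}(0,T;X_1)$ together with the averaging decomposition $v_n(t)=\frac{1}{h}\int_t^{t+h}v_n(s)\,ds-\frac{1}{h}\int_t^{t+h}\bigl(v_n(s)-v_n(t)\bigr)\,ds$ that you indicate under ``Main obstacle'' (the first term being bounded in $X_0$ by H\"older, hence precompact in $X_1$, and the second small uniformly in $n$). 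The outline is correct and you have correctly isolated the one delicate step, namely converting the purely integral $X_0$-bound into pointwise-in-$t$ precompactness.
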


Set $A_H= 1/\varepsilon (A + \nu \Delta)$, $\nu,\varepsilon>0$, i.e. $A= -\nu \Delta + \varepsilon A_H$. Consider first the solutions to the hyper-viscous Navier-Stokes equations.
\begin{lemma}\label{lem:ns_timereg}
Let $u_{\varepsilon_n}$, $\varepsilon_n>0$, as in Theorem~\ref{thm:hns} $(b)$,  then for $s>5/2$
\begin{align*}
\norm{\partial_t u_{\varepsilon_n}}_{L^{2}(0,T;H^{-s})} \leq C(\norm{u_0}_{L^2}, \norm{f}_{L^2(0,T;H^{-1})}),  \quad n\in \N.
\end{align*}
\end{lemma}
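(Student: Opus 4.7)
The plan is to invert the evolution equation and estimate $\partial_t u_{\varepsilon_n}$ term by term. From \eqref{eq:NS} one has
\begin{align*}
\partial_t u_{\varepsilon_n} \;=\; f \;-\; A_n u_{\varepsilon_n} \;-\; \PP_\sigma \div\bigl(u_{\varepsilon_n}\otimes u_{\varepsilon_n}\bigr),
\end{align*}
so I would show that each of the three terms on the right is bounded in $L^2(0,T;H^{-s}_{per})$ by a constant depending only on $\|u_0\|_{L^2}$ and $\|f\|_{L^2(0,T;H^{-1})}$, uniformly for the $\varepsilon_n\to 0$. The forcing contributes trivially, since $H^{-1}_{per}\hookrightarrow H^{-s}_{per}$ for $s\geq 1$.

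For the linear part $A_n u_{\varepsilon_n}$, I would use the uniform energy estimate \eqref{eq:energyestimate}, which provides $\|u_{\varepsilon_n}\|_{L^2(0,T;D(A_n^{1/2}))}\leq C$ independently of $n$. Writing things on the Fourier side with $a_n(k)$ the symbol of $A_n$,
\begin{align*}
\|A_n u\|_{H^{-s}}^2 \;=\; \sum_{k\in\Z^3} (1+|k|^2)^{-s}\,a_n(k)^2\,|\hat u(k)|^2 \;\leq\; \Bigl(\sup_{k\neq 0}(1+|k|^2)^{-s}a_n(k)\Bigr)\|u\|_{D(A_n^{1/2})}^2.
\end{align*}
In case $(i)$ one has $a_n(k)=\nu|k|^2+\varepsilon_n|k|^{5/2}$, so the supremum is uniformly bounded once $s\geq 5/4$ and $\varepsilon_n\leq 1$; in case $(ii)$ one has $a_n(k)=\nu|k|^2+\varepsilon_n|k_H|^4$, and the supremum is uniformly bounded for $s\geq 2$. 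Integrating in time then gives $\|A_n u_{\varepsilon_n}\|_{L^2(0,T;H^{-s})}\leq C$ uniformly.

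For the nonlinear term I would use the standard estimate
\begin{align*}
\bigl\|\PP_\sigma\div(u\otimes u)\bigr\|_{H^{-s}_{per}} \;\leq\; C\,\|u\otimes u\|_{H^{-s+1}_{per}} \;\leq\; C\,\|u\otimes u\|_{L^1} \;\leq\; C\,\|u\|_{L^2}^2,
\end{align*}
where I invoke the three-dimensional Sobolev embedding $L^1(\Omega)\hookrightarrow H^{-s+1}_{per}(\Omega)$, which is valid precisely for $-s+1<-3/2$, i.e.\ for $s>5/2$ (this is what forces the strict inequality in the statement). Together with the uniform bound $u_{\varepsilon_n}\in L^\infty(0,T;L^2)\subset L^4(0,T;L^2)$ from the energy estimate, this yields an $L^\infty(0,T)$-control on the nonlinearity, hence an $L^2(0,T;H^{-s})$-bound, uniformly in $\varepsilon_n$.

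The only subtlety is the uniformity of the linear estimate in $\varepsilon_n$: one must check that the $\varepsilon_n$-weighted higher-order piece of $A_n u$ is fully absorbed by the corresponding $\varepsilon_n$-weighted piece of $\|u\|_{D(A_n^{1/2})}^2$, which is exactly what the symbol comparison above accomplishes. With the three pieces combined, the triangle inequality in $L^2(0,T;H^{-s})$ yields the claimed bound, with constant depending only on $\|u_0\|_{L^2}$ and $\|f\|_{L^2(0,T;H^{-1})}$.
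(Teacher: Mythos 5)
Your argument is correct, and for the nonlinear term it coincides with the paper's: both rest on the duality embedding $L^1(\Omega)\hookrightarrow H^{-s+1}_{per}(\Omega)$ for $s>5/2$ together with the uniform bound $u_{\varepsilon_n}\in L^\infty(0,T;L^2)$ from the energy (in)equality. Where you genuinely diverge is in the treatment of the linear term. The paper never estimates $A_n u_{\varepsilon_n}$ in $H^{-s}$ at all: it extends $u_{\varepsilon_n}$ in time by reflection, cut-off and periodization, takes the Fourier transform in $t$ and the Fourier series in space, and uses the elementary inequality $|i\tau+a_n(k)|^2=\tau^2+a_n(k)^2\ge\tau^2$ (valid because $a_n(k)\ge 0$) to absorb the entire operator $\partial_t+A_n$ and read off $\|\partial_t u_{\varepsilon_n}\|$ directly from the right-hand side $f+\cF(u_{\varepsilon_n},u_{\varepsilon_n})$ plus a commutator term from the cut-off. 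You instead isolate $A_n u_{\varepsilon_n}$ and control it via the symbol comparison $\sup_{k}(1+|k|^2)^{-s}a_n(k)<\infty$ against the uniform $L^2(0,T;D(A_n^{1/2}))$ energy bound; the verification that the $\varepsilon_n$-weighted part of the symbol is matched by the $\varepsilon_n$-weighted part of the $D(A_n^{1/2})$-norm is exactly the point that makes this uniform in $n$, and your exponent counts ($s\ge 5/4$ resp.\ $s\ge 2$, both implied by $s>5/2$) are right. Your route is more elementary (no extension operator, no Fourier transform in time) at the cost of needing $s$ large enough for the linear term as well; in the paper's version the restriction $s>5/2$ comes solely from the nonlinearity. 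Both yield the stated uniform bound.
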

\begin{proof}
Let 
	\begin{align*}
X_0=H^{-s}_{per,\sigma}(\Omega) \quad \hbox{and}\quad X_1= \{u\in X_0 \colon \frac{|a(k)|^2}{(1+|k|^2)^s} |\hat{u}(k)|^2 <\infty\},
	\end{align*}
then $A$ with domain $D(A)=X_1$ defines a self-adjoint operator in $X_0$.  	
One defines for a Hilbert space $X$ an extension operator  $$E_{\chi}= \chi \cdot E_{per} \circ E_{even}\colon H^1(0,T;X)\rightarrow H^1(\R;X)$$ by even reflection $E_{even}\colon H^1(0,T;X)\rightarrow H^1(-T,T;X)$ at $0$, periodic continuation by $E_{per}\colon H^1(-T,T;X)\rightarrow H^1(\R;X)$ and multiplication by a sufficiently smooth cut-off function $\chi$ with $\supp \chi \subset [-2T,2T]$.
This operator  extends to the corresponding $L^2$-space.  Moreover,
\begin{align}\label{eq:echiu}
(\partial_t  + A)E_{\chi}u_{\varepsilon_n} = \chi'E_{per}E_{even}u_{\varepsilon_n} + E_{\chi}\cF(u_{\varepsilon_n},u_{\varepsilon_n}) + E_{\chi}f.
\end{align}
Now, taking Fourier transform in time and Fourier series in space, one obtains 
 \begin{align*}
\norm{(\partial_t  + A)E_{\chi}u_{\varepsilon_n}}_{\IE_0(\R)}= C \sum_{k\in \Z^3}\int_{\R}|i\tau + a(k)|^2|\widehat{E_{\chi}u}_{\varepsilon_n}(\tau,k)|^2 d\tau
\geq \norm{\partial_t E_{\chi}u}_{\IE_0(\R)} \geq c\norm{\partial_tu_{\varepsilon_n}}_{\IE_0(T)},
\end{align*}
for some $C,c>0$, 
using the ellipticity of the symbol $a(k)$ of $A$, where
\begin{align*}
\hat{\varphi}(\tau,k) = \frac{1}{2\pi}\int_{\R}\int_{\Omega} \varphi(t,x,y,z) e^{ik\cdot(x,y,z)}e^{i\tau t} dx\,dy\, dz\,dt.
\end{align*}
Hence by \eqref{eq:echiu} and using the boundedness of the extension operator
 \begin{align*}
 \norm{\partial_tu_{\varepsilon_n}}_{\IE_0(T)} \leq C \left(\norm{u_{\varepsilon_n}}_{\IE_0(T)} + \norm{\cF(u_{\varepsilon_n},u_{\varepsilon_n})}_{\IE_0(T)}+  \norm{f}_{\IE_0(T)} \right).
\end{align*} 
Choosing $s>5/2$, one has $L^1(\Omega)\hookrightarrow H^{-s+1}_{per,\sigma}(\Omega)$, and hence using \eqref{eq:energyestimate}
\begin{align*}
\norm{\cF(u_{\varepsilon_n},u_{\varepsilon_n})}_{L^2(0,T;H^{-s})} 
&= \left(\int_0^T\norm{\PP_{\sigma}\div (u_{\varepsilon_n}\otimes u_{\varepsilon_n})}^2_{H^{-s}}\right)^{1/2}
\leq \norm{u_{\varepsilon_n}}^2_{L^2(0,T;L^2)} \\
&\leq C T
(\norm{u(0)}^2_{L^2} + \norm{f}^2_{L^{2}(0,T;H^{-1})}), \quad n\in \N.
\end{align*}
	Also, using the embeddings $H^{-s}_{per,\sigma}(\Omega)\hookrightarrow L^2(\Omega)$ and $D(A^{-1/2})\hookrightarrow H^{-1}_{per,\sigma}(\Omega)$ one obtains for some $C>0$
\begin{align*}
\norm{u_{\varepsilon_n}}_{\IE_0(T)} \leq C T \norm{u_{\varepsilon_n}}^2_{L^{\infty}(0,T;L^2)} \quad \hbox{and} \quad \norm{f}_{\IE_0(T)} \leq C \norm{f}_{L^{2}(0,T;H^{-1})}
\end{align*}	
which concludes the proof using again 	\eqref{eq:energyestimate}.
\end{proof}
\begin{proof}[Proof of Theorem~\ref{thm:hns} $(b)$]
First, by Lemma~\ref{lem:ns_timereg} one can apply Theorem~\ref{thm:Temam} with $\alpha_0=2$, $\alpha_1=2$ and
\begin{align*}
X_1=H^{-s}_{per, \sigma}(\Omega), \quad X_0= H^{1}_{per, \sigma}(\Omega), \quad X = H^{1-\delta}_{per, \sigma}(\Omega), \quad \delta \in (0,1], \quad s>5/2.
\end{align*}
Hence strong convergence follows from the compact embedding.

Second, one shows that the energy inequality holds. The strong convergence above implies convergence in $L^2(0,T;H^{-1})$  and therefore for fixed test function $\varphi$ one has
\begin{align*}
\int_s^t\varepsilon (A_H)^{1/2}u_{\varepsilon} \cdot (A_H)^{1/2}\varphi \leq \varepsilon\norm{u_\varepsilon}_{L^2(0,T; L^2(\Omega)))} \norm{\varphi}_{L^2(0,T; D(A))} \to 0 \hbox{ as } \varepsilon\to 0,
\end{align*} 
where $A_H=\Delta_H^2$ or $A_H=(-\Delta)^{5/4}$, and furthermore due to the weak convergence
\begin{align*}
\int_0^t f(\tau)\cdot u_{\varepsilon}(\tau) \hbox{d} \tau \to \int_0^t f(\tau)\cdot u(\tau) \hbox{d} \tau \hbox{ as } \varepsilon\to 0. 
\end{align*}
Hence the energy inequality follows from weak convergence, and  the limit function satisfies the Navier-Stokes equations in the weak sense for test functions $\varphi \in C^{1}([0,T]; D(A^{1/2}))\cap L^2(0,T; D(A))$. In particular for the non-linear term it follows from the strong convergence in $L^2(0,T; H^{3/4}(\Omega))$ that 
\begin{multline*}
\int_0^t[(u_\varepsilon\cdot \nabla u_\varepsilon)-(u\cdot \nabla u)]\cdot \varphi \\
\leq
C  \norm{u_\varepsilon- u}_{L^2(0,T; H^{3/4})}(\norm{u_\varepsilon}_{L^2(0,T; H^{3/4})}+ \norm{u}_{L^2(0,T; H^{3/4})} \norm{\nabla \varphi}_{L^2(0,T; L^2)}) \to 0 \hbox{ as } \varepsilon\to 0.
\end{multline*} The various terms define continuous functionals even for  $\varphi \in C^{1}([0,T]; D((-\PP_{\sigma}\Delta)^{1/2}))\cap L^2(0,T; D(-\PP_{\sigma}\Delta))$.
By continuity and density of $D(A)\subset D(-\PP_{\sigma} \Delta)$ this carries over to the larger class of test functions $\varphi \in C^{1}([0,T]; D((-\PP_{\sigma}\Delta)^{1/2}))\cap L^2(0,T; D(-\PP_{\sigma}\Delta))$.  
\end{proof}


\begin{lemma}\label{lem:pe_timereg}
Let $v_{\varepsilon_n}$, $\varepsilon>0$ as in Theorem~\ref{thm:hpe} $(b)$, then for $s>5/2$
\begin{align*}
\norm{\partial_t v_{\varepsilon_n}}_{L^{2}(0,T;H^{-s})} \leq C(\norm{u_0}_{L^2}, \norm{f}_{L^2(0,T;H^{-1})}), \quad n\in \N.
\end{align*}
\end{lemma}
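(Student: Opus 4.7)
The plan is to follow the structure of Lemma~\ref{lem:ns_timereg} and adapt the non-linear estimate to handle the additional $w(v)$-term that distinguishes the primitive equations from the Navier--Stokes equations.

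First, I would fix $s>5/2$ and set $X_0 = H^{-s}_{per,\bar\sigma}(\Omega)$, with $A$ considered as its self-adjoint extension on the interpolation-extrapolation scale, so that $A\colon D(A)\subset X_0\to X_0$ with symbol $a(k)$ as in Proposition~\ref{prop:loc_NS}. Exactly as in the proof of Lemma~\ref{lem:ns_timereg}, I would introduce the extension operator $E_\chi = \chi\cdot E_{per}\circ E_{even}\colon H^1(0,T;X_0)\to H^1(\R;X_0)$ and apply it to \eqref{eq:PE2}, obtaining
\begin{equation*}
(\partial_t+A)E_\chi v_{\varepsilon_n} \;=\; \chi' E_{per}E_{even}v_{\varepsilon_n} \;+\; E_\chi \cF_{PE}(v_{\varepsilon_n},v_{\varepsilon_n}) \;+\; E_\chi f.
\end{equation*}
Then a Fourier transform in $t$ together with the Fourier series in $(x,y,z)$, and the ellipticity bound $|i\tau+a(k)|\ge |\tau|$ for the symbol of $A$, yields
\begin{equation*}
\norm{\partial_t v_{\varepsilon_n}}_{L^2(0,T;H^{-s})}
\;\leq\;
C\bigl(\norm{v_{\varepsilon_n}}_{L^2(0,T;H^{-s})}+\norm{\cF_{PE}(v_{\varepsilon_n},v_{\varepsilon_n})}_{L^2(0,T;H^{-s})}+\norm{f}_{L^2(0,T;H^{-s})}\bigr),
\end{equation*}
uniformly in $\varepsilon_n$. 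The first and third terms are immediately controlled by the energy estimate \eqref{eq:energyestimate} together with the embeddings $L^2(\Omega)\hookrightarrow H^{-s}_{per,\bar\sigma}(\Omega)$ and $H^{-1}_{per,\bar\sigma}(\Omega)\hookrightarrow H^{-s}_{per,\bar\sigma}(\Omega)$.

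The main obstacle is the estimate on the non-linearity, since unlike in the Navier--Stokes case the product $u(v)\otimes v$ contains a factor $w(v)=-\int_{-1}^z\div_H v$ that is not in $L^2$ pointwise in time merely from $v\in L^2$. To bypass this, I would rewrite, using $\div u(v)=0$,
\begin{equation*}
\cF_{PE}(v,v)=\PP_{\bar\sigma}\bigl(w(v)\partial_z v+v\cdot\nabla_H v\bigr)=\PP_{\bar\sigma}\div\bigl(u(v)\otimes v\bigr),
\end{equation*}
so that one derivative can be absorbed into the Sobolev exponent:
\begin{equation*}
\norm{\cF_{PE}(v,v)}_{H^{-s}} \;\leq\; C\,\norm{u(v)\otimes v}_{H^{-s+1}} \;\leq\; C\,\norm{u(v)\otimes v}_{L^1(\Omega)},
\end{equation*}
where in the last step I use the embedding $L^1(\Omega)\hookrightarrow H^{-s+1}_{per}(\Omega)$, valid since $s>5/2$. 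The $v_iv_j$ entries are then estimated pointwise in time by $\norm{v}^2_{L^2}$. For the $w(v)v_j$ entries the Poincar\'{e}-type bound $\norm{w(v)}_{L^2(\Omega)}\le C\norm{\div_H v}_{L^2(\Omega)}\le C\norm{v}_{H^1(\Omega)}$ yields, after Cauchy--Schwarz in time,
\begin{equation*}
\norm{w(v_{\varepsilon_n})\,v_{\varepsilon_n}}_{L^2(0,T;L^1)} \;\leq\; \norm{w(v_{\varepsilon_n})}_{L^2(0,T;L^2)}\norm{v_{\varepsilon_n}}_{L^\infty(0,T;L^2)} \;\leq\; C\,\norm{v_{\varepsilon_n}}_{L^2(0,T;D(A^{1/2}))}\norm{v_{\varepsilon_n}}_{L^\infty(0,T;L^2)}.
\end{equation*}

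Finally, collecting all estimates and invoking the uniform energy bound \eqref{eq:energyestimate}, which for $f\in L^2(0,T;H^{-1}_{per,\bar\sigma}(\Omega))\hookrightarrow L^2(0,T;D(A^{-1/2}))$ yields a constant depending only on $\norm{v_0}_{L^2}$ and $\norm{f}_{L^2(0,T;H^{-1})}$, one obtains the claimed uniform bound on $\norm{\partial_t v_{\varepsilon_n}}_{L^2(0,T;H^{-s})}$. The non-trivial step is the non-linear estimate above; everything else is a direct transcription of the Navier--Stokes argument.
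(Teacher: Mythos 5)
Your proposal is correct and follows essentially the same route as the paper: the linear part is handled exactly as in Lemma~\ref{lem:ns_timereg} via the extension operator and the ellipticity of the symbol, and the non-linearity is bounded through $\norm{\PP_{\bar\sigma}\div(u(v)\otimes v)}_{H^{-s}}\leq C\norm{u(v)\otimes v}_{L^1}$ with the $v\otimes v$ part controlled by $\norm{v}_{L^2}^2$ and the $w(v)v$ part by the Poincar\'e bound $\norm{w(v)}_{L^2}\leq C\norm{v}_{H^1}$ together with H\"older in time, which is precisely the paper's argument.
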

\begin{proof}
The proof is similar to the one of Lemma~\ref{lem:pe_timereg}, and here one
estimates the non-linearity by
\begin{align*}
\int_0^T\norm{\PP_{\bar\sigma}\div (u(v_{\varepsilon_n})\otimes v_{\varepsilon_n})}_{H^{-s}}^2 &\leq \int_0^T\norm{u(v_{\varepsilon_n})\otimes v_{\varepsilon_n}}_{L^1} \leq \int_0^T\norm{w(v_{\varepsilon_n})}^2_{L^2}\norm{v_{\varepsilon_n}}^2_{L^2} + \norm{v_{\varepsilon_n}}_{L^2}^4  \\
&\leq C \norm{v_{\varepsilon_n}}^2_{L^{\infty}(0,T; L^2)} \norm{v_{\varepsilon_n}}^2_{L^2(0,T; H^1)}
+  C \norm{v_{\varepsilon_n}}^4_{L^{\infty}(0,T; L^2)} \\
&\leq C(\norm{v_0}^2_{L^2}+ \norm{f}^2_{L^2(0,T;H^{-1})})^2, \quad n\in\N.
\end{align*}
Here, one has used that $\PP_{\bar\sigma}$ is bounded in $H^s_{per}(\Omega)$.  Hence, $\norm{\partial_tv_{\varepsilon}}^2_{L^2(0,T;H^{-s}_{per})}$ is uniformly bounded.
\end{proof}

\begin{proof}[Proof of Theorem~\ref{thm:hpe} $(b)$]
By Lemma~\ref{lem:ns_timereg} one can apply Theorem~\ref{thm:Temam} with $\alpha_0=2$, $\alpha_1=2$ and
\begin{align*}
X_1=H^{-s}_{per, \bar\sigma}(\Omega), \quad X_0= H^{1}_{per, \bar\sigma}(\Omega), \quad X = H^{1-\delta}_{per, \bar\sigma}(\Omega), \quad \delta \in (0,1], \quad s>5/2
\end{align*}
Hence strong convergence follows from the compact embedding.

The energy inequality holds since the strong convergence implies convergence in $L^2(0,T;H^{-1}(\Omega))$  and therefore for fixed test function $\varphi$ one has
\begin{align*}
\int_s^t\varepsilon (A_H)^{1/2}v_{\varepsilon} \cdot (A_H)^{1/2}\varphi \leq \varepsilon\norm{v_\varepsilon}_{L^2(0,T; L^2(\Omega)))} \norm{\varphi}_{L^2(0,T; D(A))} \to 0 \hbox{ as } \varepsilon\to 0,
\end{align*} 
where $A_H=\Delta_H^2$ or $A_H=(-\Delta)^{8/5}$. 
Hence the energy inequality follows from weak convergence, and  the limit function satisfies the primitive equations in the weak sense for $\varphi \in C^{1}([0,T]; D(A^{1/2}))\cap L^2(0,T; D(A))$. This extends by continuity to   $\varphi \in C^{1}([0,T]; D((-\PP_{\bar\sigma}\Delta)^{1/2}))\cap L^2(0,T; D(-\PP_{\bar\sigma}\Delta))$. 
In particular for the non-linear term it follows from the regularity of $v$ that
\begin{align*}
\int_0^t(u(v)\cdot \nabla v)\cdot \varphi 
&\leq
\norm{w(v) v + v\otimes v}_{L^2(0,T);L^1)}
\norm{\varphi}_{L^{2}(0,T;H^2)}
\\
&\leq C \norm{v}_{L^2(0,T;H^1)}\norm{v}_{L^\infty(0,T;L^2)}
\norm{\varphi}_{L^{2}(0,T;H^2)}, 
\end{align*}
where one uses anisotropic H\"older's inequalities with respect to time and space and $H^2(\Omega)\hookrightarrow L^{\infty}(\Omega)$.
By density of $D(A)\subset D(-\PP_{\bar\sigma} \Delta)$ this carries over to the larger class of test functions. 
\end{proof}

\subsection*{Acknowledgement} 
I would like to thank Daniel Reinert from the German Weather Service (DWD) for helpful insights into the functionality of meteorological simulations and in particular for pointing out the references \cite{COSMO} and \cite{Lauritzenetall}.

\end{document}